\documentclass[reqno, 12pt]{amsart}

\usepackage{cmap}
\usepackage{a4wide}
\usepackage[T2A]{fontenc}
\usepackage[utf8]{inputenc} 
\usepackage[russian,english]{babel}
\usepackage{amsfonts}
\usepackage{amssymb, amsthm, amscd}
\usepackage{amsmath}
\usepackage{amsaddr}
\usepackage{mathtools}
\usepackage{lipsum}
\usepackage{comment}
\usepackage{etoolbox}
\usepackage[pdftex]{graphicx}
\usepackage[unicode]{hyperref}
\usepackage[matrix,arrow,curve]{xy}
\usepackage[usenames,dvipsnames]{xcolor}
\usepackage{colortbl}
\usepackage{textcomp}

\sloppy

\newcommand{\R}{\mathbb{R}}

\renewcommand{\C}{\mathcal{C}}

\newcommand{\M}{\mathcal{M}}
\newcommand{\F}{\mathcal{F}}

\newcommand{\eps}{\varepsilon}
\newcommand{\q}{\;\,}
\newcommand{\grad}{\mathop{\mathrm{grad}}\nolimits}

\newcommand{\ph}{\varphi}
\newcommand{\Crit}{\mathop{\mathrm{Crit}}\nolimits}
\newcommand{\Morse}{\mathop{\mathrm{Morse}}\nolimits}

\newcommand{\codim}{\mathop{\mathrm{codim}}\nolimits}
\newcommand{\im}{\mathop{\mathrm{im}}\nolimits}
\newcommand{\sign}{\mathop{\mathrm{sgn}}\nolimits}

\newcommand{\Hess}{\mathop{\mathrm{Hess}}\nolimits}

\renewcommand{\tilde}{\widetilde}

\newcommand{\res}[2]{\left. #1 \right|_{#2}}

\theoremstyle{plain}
\newtheorem{thm}{Theorem}[section]

\newtheorem{lem}[thm]{Lemma}

\newtheorem{prop}[thm]{Proposition}

\newtheorem{prob}[thm]{Problem}
\newtheorem{conj}[thm]{Conjecture}

\theoremstyle{definition}
\newtheorem{defn}[thm]{Definition}
\newtheorem{cor}[thm]{Corollary}

\theoremstyle{remark}
\newtheorem{rem}[thm]{Remark}



\title{On Morse index retrieval}
\author{Daniil Mamaev}
\date{\today (Last Typeset)}
\address{St. Petersburg Department of V.A. Steklov Mathematical Institute, Russia\\
Chebyshev Laboratory at St. Petersburg State University, Russia}
\email{dan.mamaev@gmail.com}

\begin{document}
\selectlanguage{english}

\begin{abstract}
A smooth function $f$ in a neighbourhood of the unit sphere $S^{n - 1}$ is said to admit index $\lambda$ if it can be extended to a function $F$ in the unit ball $B^n$ such that $F$ has a unique critical point $p$ and the Morse index of $p$ is equal to $\lambda$. It is easy to see that a function $f$ cannot admit two indices of different parity. We prove that for any two indices that differ by two there exists a function $f$ that admits both of them.
\end{abstract}

\maketitle

\section{Introduction}
Consider a smooth function $f$ in a neighbourhood of the unit sphere ${S^{n - 1} = \{x \in \R^n \colon |x| = 1\}}$. Its smooth extensions inside the unit ball ${B^n = \{x \in \R^n \colon |x| \le 1\}}$ generically have only non-degenerate critical points. In \cite{Bar94} S.Barannikov, following a question by V.I. Arnold, gave a lower bound for the number of critical points of such extensions making use of what later became known as Morse-Barannikov complexes. In the present paper we consider a related problem.

\begin{prob} \label{the problem}
Suppose we are given a smooth function $f$ in a neighbourhood $U$ of the unit sphere $S^{n - 1}$ inside the unit ball $B^n_{\mathbf 0}(1)$. Let $F\colon B_{\mathbf 0}^n(1) \to \R$ be a smooth extension of $f$ such that the origin $\mathbf 0$ is the unique critical point of $F$ and $\Hess_{\mathbf 0} F$ is non-degenerate. What information about the Morse index $\mu_F(\mathbf 0)$ can be retrieved from $f$?
\end{prob}

We say that a function $f$ defined in a neighbourhood $U$ of the unit sphere $S^{n - 1}$ inside the unit ball $B^n_{\mathbf 0}(1)$ \emph{admits} index $\lambda$ if there exists a smooth extension $F\colon B_{\mathbf 0}^n(1) \to \R$ of $f$ such that the origin is the unique critical point of $F$ and its Morse index is equal to $\lambda$. There are cases when a function admits only one index. For instance, if $\res{\grad f}{S^{n - 1}}$ always points inside the ball, then $\mathbf 0$ must be the point of global maximum of $F$ hence $\mu_F(\mathbf 0) = n$. In general, the parity of $\mu_F(\mathbf 0)$ can be retrieved from $f$ (see Proposition~\ref{parity of the index}). 

The main result of the paper is that \emph{for any $n \ge 2$ and $0 \le \lambda \le n - 2$ there exists a function $f$ defined in a neighbourhood of $S^{n - 1}$ that admits both index $\lambda$ and $\lambda + 2$} (see~Theorem~\ref{two functions}).

As for the general answer to Problem~\ref{the problem}, the following conjecture seems plausible.

\begin{conj} \label{Conjecture}
For any $n \ge 2$ there exist functions $f_0$ and $f_1$ in a neighbourhood of $S^{n - 1}$ such that $f_0$ admits indices $0, 2, \ldots, 2 \cdot [n/2]$ and $f_1$ admits indices $1, 3, \ldots, 2 \cdot [(n + 1)/2] - 1$.
\end{conj}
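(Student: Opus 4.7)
My plan is an iterative approach that uses Theorem~\ref{two functions} as the atomic index-changing operation, combined with a stability (openness) principle that allows gluing of admissibility claims.

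The first ingredient is an \textbf{openness lemma}: for each $\lambda$, the set $\A_\lambda \subset C^\infty(U)$ of functions admitting index $\lambda$ is open in the $C^r$ topology for some finite $r$. Given an admissible extension $F$ of $f$ and a $C^r$-small perturbation $\tilde f$, one uses a cutoff to modify $F$ inside a thin collar $\{1 - \eps < |x| \le 1\}$ so that the new function extends $\tilde f$, agrees with $F$ on a smaller concentric ball containing $\mathbf 0$, and has no new critical points in the collar (the last point follows from an implicit-function-theorem argument, since the gradient of $F$ is uniformly bounded away from zero in the closed collar). Granting this lemma, the conjecture reduces to finding a single function in the intersection of finitely many $C^r$-open sets $\A_\lambda$.

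For the construction I would induct on $n$, with the base cases $n = 2, 3$ following directly from Theorem~\ref{two functions}. For the inductive step I would seek a family of extensions $\{F_\lambda\}_{\lambda \text{ even}, \, \lambda \le n}$ of a common $f_0^{(n)}$ built from the previously-constructed family $\{F^{(n-2)}_\lambda\}$ by a suspension-type ansatz such as $F_\lambda(x) = \pm (x_1^2 + x_2^2) + F^{(n-2)}_{\lambda \mp 2}(x_3, \ldots, x_n) + (1 - |x|^2)\, \phi_\lambda(x)$, with $\phi_\lambda$ an auxiliary correction vanishing on $S^{n-1}$ chosen to keep the Hessian at $\mathbf 0$ of index $\lambda$ and the critical set a singleton. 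The sign choice shifts the index by $\pm 2$, and Theorem~\ref{two functions} would handle any remaining endpoint indices in the even-parity range (and analogously for $f_1$).

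The principal obstacle is \emph{simultaneous compatibility}: even after the ansatz, the various $F_\lambda$ have genuinely different boundary restrictions (the naive suspension gives $F_\lambda|_{S^{n-1}} = 1 - |y|^2 + F^{(n-2)}_{\lambda \mp 2}(y)$ with $y = (x_3, \ldots, x_n)$, which depends on $\lambda$), so the openness lemma alone is insufficient; one needs to pre-align all the boundary data up to a $C^r$-small error before invoking openness, and there is no obvious reason this should be achievable. I expect that the correct route is to organise all $F_\lambda$ into a single parametric family $F_t$ with constant boundary data, whose critical set undergoes controlled Cerf-type birth--death and index-swap transitions as $t$ varies over an interval, the Morse indices at isolated parameter values hitting all desired values. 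Producing and analysing such a family is, in my judgement, the technical heart of the conjecture and the reason it remains open.
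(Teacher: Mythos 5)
You are attempting to prove what the paper itself states only as Conjecture~\ref{Conjecture}; the paper offers no proof, and your proposal does not close the gap either --- indeed you concede this yourself in the final paragraph. The concrete breakdown is in the inductive step. Your openness lemma is fine as far as it goes (the cutoff-plus-collar argument does show each $\A_\lambda$ is $C^1$-stable), but it buys nothing: the entire difficulty of the conjecture is whether the sets $\A_\lambda$, $\A_{\lambda+2}, \ldots$ have a common point at all, and openness of each set gives no mechanism for producing one. The suspension ansatz then fails exactly where you flag it: the extensions $F^{(n-2)}_{\lambda}$ produced at the previous stage agree only near $S^{n-3}$, whereas the restriction of $\pm(x_1^2+x_2^2)+F^{(n-2)}_{\lambda\mp2}(y)$ to $S^{n-1}$ samples $F^{(n-2)}_{\lambda\mp2}$ on the whole ball $B^{n-2}$ (near the equator $x_1^2+x_2^2\approx 1$ one has $y\approx 0$), so the boundary data of the suspended extensions differ by amounts that are not small and are not controlled by any correction term $\phi_\lambda$ vanishing on $S^{n-1}$; the sign switch in $\pm(x_1^2+x_2^2)$ changes the boundary restriction as well. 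No pre-alignment procedure is given, and the $C^r$-small-perturbation lemma cannot absorb an $O(1)$ discrepancy.

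A second, smaller misuse: you invoke Theorem~\ref{two functions} to ``handle any remaining endpoint indices,'' but that theorem constructs, for each pair $(\lambda,\lambda+2)$, \emph{some} boundary function admitting both --- a different function for each pair. It is not a tool for adding a new admissible index to a function you have already fixed, which is what your chaining would require. The actual content needed --- a single boundary function with a one-parameter family of extensions whose unique critical point runs through all indices of one parity via controlled birth--death and rearrangement transitions (the parametric Cerf-theoretic picture you sketch at the end) --- is precisely what is missing both from your proposal and from the paper, which is why the statement remains a conjecture.
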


The paper is organised as follows. In Section~\ref{Preliminaries} we briefly review the basics of Morse and Cerf's theories and apply it to prove that the parity of $\mu_F(\mathbf 0)$ can always be retrieved from $f$, the goal of this section is mainly to fix the notation. Sections~\ref{Toolbox}~and~\ref{Functions admitting different indices} are devoted to the proof of Theorem~\ref{two functions}. In Section~\ref{Toolbox} we develop a number of tools to perform the metamorphoses of functions in a controllable manner. In Section~\ref{Functions admitting different indices} we first explain the two-dimensional case and then prove the theorem.
\medskip
\\
{\bf Acknowledgement.}
I am deeply indebted to Gaiane Panina for posing the problem and supervising my research. I am also grateful to Serguei Barannikov for useful comments. The first known to me example of a function $f$ in dimension $2$ that admits indices $0$ and $2$ was constructed by Sem\"en Podkorytov (private communication). This paper is based on my Master's thesis defended at St. Petersburg State University. The research is supported by the Russian Science Foundation under Grant 21-11-00040.

\section{Preliminaries: Morse and Cerf's theories}
\label{Preliminaries}
In this section we mainly follow two books by Milnor \cite{Milnor65}, \cite{Milnor69} and a more modern exposition by Nicolaescu \cite{Nic11}. 

Throughout the text smooth means $C^\infty$, a manifold means a manifold with boundary, and a closed manifold means a compact manifold without boundary. Let $M^n$ be a smooth $n$-dimensional manifold with the boundary $\partial M$. For a boundary point $p \in \partial M$ the tangent space $T_p \partial M$ divides the tangent space $T_p M$ into two semi-spaces that consist of the vectors pointing inside or outside the manifold; these (open) semi-spaces are denoted by $T_p^{in} M$ and $T_p^{out} M$ respectively. 

For a smooth fibre bundle $E \to M$ and a subset $X \subseteq M$ we denote by $C^{\infty}(X, E)$ the space of smooth sections of $E$ over $X$. That is, the space of (global) smooth vector fields on $M$ is $C^\infty(M, TM)$, the space of smooth real-valued functions $C^{\infty}(M, M \times \R)$ is abbreviated to $C^{\infty}(M)$. 

Given a vector field $V \in C^\infty(M, TM)$ and a smooth function $f \in C^\infty(M)$ the \emph{derivative of $f$ along $V$} is the function $Vf \in C^\infty(M)$ given by $Vf(p) = d_p f(V(p))$ for any $p \in M$.

\subsection{Morse functions}

Let ${f\colon M \to \R}$  be a smooth function. A point $p \in M$ is called a \emph{critical point} of $f$ if the differential $d_p f\colon T_pM \to \R$ vanishes, otherwise $p$ is called a \emph{regular point} of $f$. The point $p$ is critical if and only if in (any hence all) local coordinates $(x^1, \ldots, x^n)$ around $p$ the partial derivatives $\frac{\partial f}{\partial x^i}$ vanish at $p$. $\Crit f$ denotes the set of all critical points of $f$. 

Let $p$ be a critical point of the function $f$. The \emph{Hessian form} of $f$ at $p$ is a symmetric bilinear form $\Hess_p f \colon T_pM \times T_p M \to \R$ defined on a pair of tangent vectors $u, v \in T_pM$ by
$$
\Hess_p f(u, v) = \big(U(Vf)\big)(p), 
$$
where $U$ and $V$ are arbitrary extensions of $u$ and $v$ to local vector fields around $p$. In local coordinates $(x^1, \ldots, x^n)$ around $p$ one has 
$$
\Hess_p f \left(\frac{\partial}{\partial x^i}, \frac{\partial}{\partial x^j}\right) =  \frac{\partial ^ 2 f}{\partial x^i \partial x^j} (p).
$$
The $n \times n$ matrix $H_p f$ with entries $(H_p f)_{i, j} =  \frac{\partial ^ 2 f}{\partial x^i \partial x^j} (p)$ is called the \emph{Hessian matrix} of $f$ at $p$ in the local coordinates $(x^1, \ldots, x^n)$.

A point $p \in \Crit f$ is called a \emph{degenerate} critical point of the function $f$ if the Hessian form $\Hess_p f$ is degenerate (that is, there exists a vector $v \in T_p M$ such that $\Hess_p f(v, \underline{\phantom{u}}) \colon T_pM \to \R$ vanishes). Otherwise $p$ is called a \emph{non-degenerate} (or \emph{Morse}) critical point of the function $f$. The point $p$ is degenerate if and only if in local coordinates $(x^1, \ldots, x^n)$ around $p$ the Hessian matrix $H_p f$ has a zero eigenvalue. $\Morse f$ denotes the set of all Morse critical points of $f$.

For a point $p \in \Morse f$ its \emph{Morse index} $\mu_f(p)$ is the maximal dimension of a subspace of $T_p M$ on which the Hessian form $\Hess_p f$ is negative definite, that is,
$$
\mu_f (p) = \max \left\{\dim V \colon V \leq T_p M \text{ and } \forall v \in V \setminus \{0\} \q \Hess_p f(v, v) < 0\right\}.
$$
The Morse index $\mu_f(p)$ is equal to the number of negative eigenvalues of the Hessian matrix $H_p f$ in local coordinates $(x^1, \ldots, x^n)$ around $p$.
\begin{defn}
A smooth function $f\colon M \to \R$ on the smooth manifold $M$ with the boundary $\partial M$ is called a \emph{Morse function} if
\begin{enumerate}
\item $\Crit f \subset M \setminus \partial M$, that is, there are no critical points of $f$ in the boundary $\partial M$,
\item $\Morse f = \Crit f$, that is, the critical points of $f$ are non-degenerate, and
\item $\Morse \res{f}{\partial M} = \Crit \res{f}{\partial M}$, that is, the critical points of the restriction of $f$ to the boundary $\partial M$ are non-degenerate.
\end{enumerate}
\end{defn}

\subsection{Flow lines}

\begin{defn} \label{gradient-like vector field}
Let $f$ be a Morse function on a smooth manifold $M$.
\begin{enumerate}
\item A local coordinate system $(x_1, \ldots, x_n)$ in a neighbourhood $U_p$ of a point $p \in \Crit f$ is \emph{adapted to $f$} if 
$$
f = f(p) - \left(x^1\right) ^ 2 - \ldots - \left(x^\lambda\right)^2 + \left(x^{\lambda + 1}\right)^2 + \ldots + \left(x^n\right) ^ 2 \text{ in } U_p
$$
\item A  vector field $V \in C^\infty(M, TM)$ is called a \emph{gradient-like vector field for $f$}  if $Vf (p) > 0$ for all non-critical $p \in M$, and for any critical point $p$ of $f$ there exists a neighbourhood $U_p$ of $p$ and a coordinate system $(x^1, \ldots, x^n)$ in $U_p$ adapted to $f$ such that ${V/2 = -x^1 \frac{\partial}{\partial x^1} - \ldots -x^\lambda \frac{\partial}{\partial x^\lambda} +x^{\lambda + 1} \frac{\partial}{\partial x^{\lambda + 1}} + \ldots +x^n \frac{\partial}{\partial x^n}}$ on $U_p$.
\item A Riemannian metric $g \in C^{\infty}(M, S^2T^*M)$ is \emph{adapted to $f$} if for any critical point $p$ of $f$ there exists a neighbourhood $U_p$ of $p$ and a coordinate system $(x^1, \ldots, x^n)$ in $U_p$ adapted to $f$ such that $g = \left(dx^1\right)^2 + \ldots + \left(dx^n\right)^2$ on $U_p$.
\end{enumerate}
\end{defn}

\begin{lem}[Morse]
Let $f$ be a smooth function on a smooth manifold $M$ and $p$ be a non-degenerate critical point of $f$. Then there exists a neighbourhood $U$ of $p$ and local coordinates $(x_1, \ldots, x_n)$ on $U$ adapted to $f$.
\end{lem}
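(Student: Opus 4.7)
The plan is the classical Morse lemma argument via iterated, smooth completion of the square. First I would pick any chart around $p$, translate so that $p$ corresponds to the origin, and subtract a constant so that $f(0) = 0$. Since $f$ and all its first partials vanish at $0$, applying Hadamard's lemma twice gives
$$
f(x) = \sum_{i,j=1}^{n} x^i x^j h_{ij}(x),
$$
with smooth $h_{ij}$ satisfying $h_{ij}(0) = \tfrac{1}{2} \partial_i \partial_j f(0)$. After symmetrising (replace $h_{ij}$ by $(h_{ij}+h_{ji})/2$) we may assume the matrix $H(x) = (h_{ij}(x))$ is symmetric; non-degeneracy of $\Hess_0 f$ then says $H(0)$ is invertible.

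Next I would induct on the number of variables already normalised. Suppose that on some neighbourhood of $0$ we have coordinates $(y^1,\ldots,y^n)$ in which
$$
f = \pm (y^1)^2 \pm \cdots \pm (y^{k-1})^2 + \sum_{i,j \geq k} y^i y^j h'_{ij}(y),
$$
with the residual matrix $(h'_{ij}(0))_{i,j \geq k}$ non-degenerate. A linear change in $y^k,\ldots,y^n$ makes $h'_{kk}(0) \neq 0$; by continuity $h'_{kk}$ keeps a constant sign in a neighbourhood, so $\sqrt{|h'_{kk}|}$ is smooth there. Then the substitution
$$
z^k = \sqrt{|h'_{kk}|}\,\Bigl( y^k + \sum_{j > k} \frac{h'_{kj}(y)}{h'_{kk}(y)}\, y^j \Bigr), \qquad z^i = y^i \text{ for } i \neq k,
$$
is a smooth diffeomorphism near $0$ (its Jacobian at $0$ is triangular with a nonzero diagonal) and absorbs every term involving $y^k$ into a single pure square $\pm (z^k)^2$, leaving a tail of the same form in one fewer variable, still with non-degenerate residual matrix. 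After $n$ steps, $f$ has been brought to the form $\sum_i \eps_i (z^i)^2$ with $\eps_i = \pm 1$; a final permutation of coordinates places the $\lambda$ minus signs first.

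The only delicate points are verifying that the completing-the-square substitutions are smooth and that the induction can be continued. The first follows because the linear pre-adjustment and the constant-sign neighbourhood make $\sqrt{|h'_{kk}|}$ and $1/h'_{kk}$ smooth; the second follows because each substitution preserves non-degeneracy of the residual symmetric matrix (it acts by a congruence by an invertible triangular block). The total count of negative squares produced is forced by Sylvester's law of inertia to equal $\mu_f(p)$, which matches the $\lambda$ in the statement.
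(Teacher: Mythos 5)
Your argument is correct and is precisely the classical Hadamard-lemma/completion-of-squares proof of the Morse lemma (Milnor, \emph{Morse Theory}, Lemma 2.2), which is exactly what the paper relies on: it states the lemma without proof, deferring to the cited references. All the delicate points you flag (smoothness of $\sqrt{|h'_{kk}|}$ after the sign-fixing linear change, invertibility of the triangular substitution, preservation of non-degeneracy of the residual block, and Sylvester's law identifying the number of minus signs with $\mu_f(p)$) are handled correctly, so nothing needs to be added.
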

\begin{rem} Let $f$ be a Morse function on a smooth manifold $M$ with boundary $\partial M$. Riemannian metrics on $M$ adapted to $f$ and gradient-like vector fields for $f$ are closely related. Namely,
\begin{enumerate}
\item Given a Riemannian metric $g$ on $M$ adapted to $f$ (their existence is easily deduced by a partition of unity argument), the vector field $V = \grad_g f$ is called the \emph{gradient-like vector field for $f$ associated to $g$}.
\item Given a gradient-like vector field $V$ for $f$ one can define a Riemannian metric $g$ on $M$ adapted to $f$ such that $\grad_g f = V$ in the following fashion. Fix neighbourhoods $U_p$ from the definition of the gradient-like vector field. Define Riemannian metrics $g_p \in C^{\infty}(U_p, S^2T^*M)$ by $g_p = \left(dx^1\right)^2 + \ldots + \left(dx^n\right)^2$. Take a positive definite bilinear form $\tilde g \in C^{\infty}(M\setminus \Crit f, S^2 (\ker df)^*)$ and define a Riemannian metric $g_{reg} \in C^{\infty}(M\setminus \Crit f, S^2 T^*M)$ by 
$$
g_{reg}(u + aV(x), v + bV(x)) = \tilde g(u, v) + ab \cdot (Vf)(x) \text{ for } u, v \in \ker d_x f \text{ and } a, b \in \R.
$$
The metric $g$ obtained from $g_p$ and $g_{reg}$ using a partition of unity subordinate to the open cover $\{U_p\}_{p \in \Crit f} \cup \{M \setminus \Crit f\}$ is the one we need.
\end{enumerate}

\end{rem}

Now let $M$ be a closed manifold, $f\colon M \to \R$ be a Morse function and $V \in C^\infty(M, TM)$ be a gradient-like vector field for $f$.  Denote by $\Phi_t$ the flow on $M$ determined by $-V$, that is, 
$$
\left.\frac{d}{dt}\right|_{t = t_0} \Phi_t(x)= -V(\Phi_{t_0}(x)) \text{ and } \Phi_0(x) = x \text{ for any } x \in M \text{ and } t_0 \in \R. 
$$
For any point $x \in M$ the limits $\Phi_{\pm \infty}(x) = \lim\limits_{t \to \pm \infty} \Phi_t(x)$ exist and are critical points of $f$. For a point $x \in M$ the curve $\gamma_V^x = \Phi_{\underline{\phantom{t}}} (x) \colon \R \to M$ is called the \emph{parametrised flow line} through $x$ (with respect to $V$) and its image is called the (unparametrised) \emph{flow line} through $x$ (with respect to $V$).

For a point $p \in \Crit f$ we set
$$
W_{p}^{\pm} = W_{p}^{\pm}(V) := \Phi^{-1}_{\pm \infty}(p) = \left\{x \in M \colon \lim_{t \to \pm \infty}  \Phi_t(x) = p\right\}.
$$
The sets $W_p^{+}$ and $W_p^-$ are called the \emph{stable} and \emph{unstable manifolds} of $p$ with respect to $V$ respectively. 
\begin{defn}
Let $f\colon M \to \R$ be a Morse function on a smooth closed manifold $M$ and $V \in C^\infty(M, TM)$ be a gradient-like vector field for $f$. $V$ is called a \emph{Morse-Smale vector field} adapted to $f$ if for any $p, q \in \Crit f$ the unstable manifold $W_{p}^-(V)$ intersects the stable manifold $W_{q}^+(V)$ transversally.
\end{defn}

\begin{thm}[Smale, \cite{Smale61}]
For any Morse function $f$ on a smooth closed manifold $M$ there exists a Morse-Smale vector field on $M$ adapted to $f$. 
\end{thm}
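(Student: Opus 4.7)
The plan is to start with any gradient-like vector field $V_0$ for $f$ (guaranteed to exist by taking $\grad_{g_0} f$ for any adapted metric $g_0$) and iteratively perturb it, one pair of critical points at a time, until the Morse--Smale transversality condition holds for all pairs. Only finitely many pairs $(p, q) \in \Crit f \times \Crit f$ need attention: if $f(p) \le f(q)$ then no downward $V$-flow line can connect $p$ to $q$, so $W_p^-(V) \cap W_q^+(V)$ is automatically empty (or reduces to $\{p\}$ when $p = q$), and transversality holds vacuously. Hence I restrict to pairs with $f(p) > f(q)$.

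For a single such pair I would reduce transversality in $M$ to transversality on a level set. Pick a regular value $c \in (f(q), f(p))$ bounded away from all critical values; the level set $\Sigma := f^{-1}(c)$ is a smooth closed hypersurface of $M$, and the intersections
\[
S_p^- := W_p^-(V_0) \cap \Sigma, \qquad S_q^+ := W_q^+(V_0) \cap \Sigma
\]
are smooth embedded spheres of dimensions $\mu_f(p) - 1$ and $n - 1 - \mu_f(q)$ respectively (flow out a small linking sphere in the adapted chart around $p$, respectively $q$). Because $V_0$ is tangent to both $W_p^-$ and $W_q^+$ and transverse to $\Sigma$, the transversality $W_p^- \pitchfork W_q^+$ at a point of their intersection in $\Sigma$ is equivalent to the transversality $S_p^- \pitchfork S_q^+$ inside $\Sigma$.

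To fix transversality for this pair, I would invoke the Thom transversality theorem in $\Sigma$: an arbitrarily $C^\infty$-small diffeotopy $\Psi_s$ of $\Sigma$ can be found so that $\Psi_1(S_p^-) \pitchfork S_q^+$ in $\Sigma$. I realise this diffeotopy by modifying $V_0$ only in a thin tubular neighbourhood $\Sigma \times (-\delta, \delta) \subset M$ (built using the $V_0$-flow, and disjoint from $\Crit f$), replacing $V_0$ there by $V_0 + W$ where $W$ is $f$-level-preserving (i.e.\ $Wf \equiv 0$) and supported in the tube, chosen so that the time-one map of $W$ restricted to $\Sigma$ equals $\Psi_1$. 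The perturbed field $V_1 := V_0 + W$ still satisfies $V_1 f = V_0 f > 0$ off $\Crit f$, and equals $V_0$ near all critical points, so it remains gradient-like for $f$; by construction, $W_p^-(V_1)$ and $W_q^+(V_1)$ now meet transversally.

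Finally I iterate this correction over the finite list of pairs in turn. The main obstacle, and the reason for ordering the argument carefully, is ensuring that each successive perturbation does not destroy transversality already achieved for previously handled pairs. This is handled by exploiting that the Morse--Smale condition is open in the $C^\infty$-topology on gradient-like vector fields (transverse intersection of compact submanifolds persists under small perturbations of the ambient vector field, which merely diffeotopes the unstable/stable manifolds slightly). Thus, at each step I may shrink the tubular neighbourhood and the size of $\Psi_s$ enough so that the perturbation is arbitrarily small in $C^\infty$, preserving all prior transversalities. After finitely many steps I obtain a gradient-like vector field $V$ for $f$ with $W_p^-(V) \pitchfork W_q^+(V)$ for all $p, q \in \Crit f$, i.e.\ a Morse--Smale field adapted to $f$.
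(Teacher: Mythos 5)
The paper never proves this statement: it is quoted from Smale's 1961 paper and used as a black box, so your proposal can only be measured against the standard proofs in the literature. Your outline does follow the classical strategy (reduce transversality of $W_p^-$ and $W_q^+$ to transversality of their traces on a regular level $\Sigma$, and realise a small isotopy of $\Sigma$ by altering the vector field in a flow collar by a level-preserving field, as in Milnor's lectures on the $h$-cobordism theorem), and for a single pair of critical points with no critical values strictly between $f(q)$ and $f(p)$ your argument is essentially complete.

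There are, however, two genuine gaps. First, when there are critical values in $(f(q), f(p))$, the slices $S_p^- = W_p^-\cap\Sigma$ and $S_q^+ = W_q^+\cap\Sigma$ are not embedded spheres and in particular not compact: the trajectories in $W_p^-$ that converge to an intermediate critical point never reach $\Sigma$, so $S_p^-$ is a sphere with a nonempty closed subset deleted. Second, and more seriously, this non-compactness invalidates the step your iteration leans on hardest, namely that transversality already achieved for earlier pairs is an open condition and therefore survives all later, sufficiently small perturbations. Transversality for a \emph{single} pair $(W_p^-, W_q^+)$ is not open among gradient-like fields for $f$: an arbitrarily small perturbation can break a connecting orbit from $p$ to an intermediate critical point $r$ and thereby create new intersection points of $W_p^-$ with $W_q^+$ appearing near the broken trajectory $p \to r \to q$, and nothing in your argument prevents these new intersections from being non-transverse. (The full Morse--Smale condition, for all pairs simultaneously, is indeed open, but that is exactly what you do not yet have in the middle of the induction.) This is the delicate point of Smale's theorem and of Kupka--Smale-type genericity arguments; the standard remedies are either to order the critical points by their values and perturb in a collar immediately below the critical point being processed, making its unstable sphere transverse to the traces of \emph{all} lower stable manifolds in one step and checking on compact finite-time flow pieces that earlier stages are untouched, or to prove that each transversality condition is residual and intersect finitely many residual sets. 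As written, your iteration does not close.
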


Let $f\colon M \to \R$ be a Morse function on the closed manifold $M^n$ and let $V \in C^{\infty}(M, TM)$ be a Morse-Smale vector field adapted to $f$. Consider two points $p, q \in \Crit f$. The intersection $W_q^p = W_p^- \cap W_q^+$ consists of the flow lines with source $q$ and target $p$ and its dimension is
$$
\dim W_q^p = n - (\codim_M W_p^- + \codim_M W_q^+) = n - (\mu_f(p) + n - \mu_f(q)) = \mu_f(p) - \mu_f(q).
$$
The space $W_q^p$ is endowed with a free action of $\R$ given by the flow $\Phi$. The quotient $\M_q^p = W_q^p/\R$ is obviously in bijection with the flow lines going from $p$ to $q$ and is thus called the \emph{moduli space of flow lines} from $p$ to $q$. 

\begin{prop}~
\begin{enumerate}
\item Let $f(q) < r < f(p)$. Then $W_q^p \cap f^{-1}(r)$ is a smooth submanifold of $M$ of dimension $\mu_f(p) - \mu_f(q) - 1$.
\item The moduli space $\M_q^p$ is a smooth manifold diffeomorphic to any of $W_q^p \cap f^{-1}(r)$ with $f(q) < r < f(p)$.
\end{enumerate}
\end{prop}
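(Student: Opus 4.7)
The plan is to combine the Morse--Smale transversality of $V$ with the fact that $f$ decreases strictly along flow lines.

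For part (1), I would first invoke the stable/unstable manifold theorem to get that $W_p^-$ and $W_q^+$ are smooth submanifolds of $M$ of dimensions $\mu_f(p)$ and $n - \mu_f(q)$; by the Morse--Smale assumption they meet transversally, so $W_q^p = W_p^- \cap W_q^+$ is a smooth submanifold of dimension $\mu_f(p) - \mu_f(q)$. Since the entire flow line through any point of $W_q^p$ stays in $W_q^p$, the field $V$ is tangent to $W_q^p$, and the defining inequality $Vf > 0$ shows that $\res{f}{W_q^p}$ is a submersion onto $(f(q), f(p))$. Therefore $W_q^p \cap f^{-1}(r)$ is a smooth codimension-one submanifold of $W_q^p$, of dimension $\mu_f(p) - \mu_f(q) - 1$.

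For part (2), the strict monotonicity of $f$ along each flow line from $p$ to $q$ means that such a flow line crosses the level $f^{-1}(r)$ in exactly one point, which yields a canonical bijection $\sigma_r\colon W_q^p \cap f^{-1}(r) \to \M_q^p$. To make this a diffeomorphism and simultaneously endow the moduli space with a smooth manifold structure, I would trivialise $W_q^p$ by the flow: the map
\[
\Psi\colon \bigl(W_q^p \cap f^{-1}(r)\bigr) \times \R \longrightarrow W_q^p, \qquad \Psi(x, t) = \Phi_t(x),
\]
is an $\R$-equivariant diffeomorphism with respect to translation on the first factor and the flow on the second. Passing to quotients yields the desired diffeomorphism $W_q^p \cap f^{-1}(r) \cong \M_q^p$, and since the $\R$-action is intrinsic to $W_q^p$ the resulting smooth structure on $\M_q^p$ is manifestly independent of $r$.

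The only genuinely technical point is verifying the smoothness of $\Psi^{-1}$: it reduces to smooth dependence of the gradient-like flow on initial data combined with the implicit function theorem applied to the equation $f(\Phi_t(x)) = r$, whose derivative in $t$ equals $-Vf(\Phi_t(x))$ and is strictly non-zero on $W_q^p \setminus \{p, q\}$, hence everywhere on the slice where one needs it. Everything else is a direct consequence of the Morse--Smale condition and the definition of a gradient-like vector field.
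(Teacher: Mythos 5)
Your argument is correct and is exactly the standard one: Morse--Smale transversality gives $W_q^p$ the dimension $\mu_f(p)-\mu_f(q)$ (this count is the one displayed in the paper right before the statement), $Vf>0$ makes $r$ a regular value of $\res{f}{W_q^p}$ for part (1), and the flow trivialisation $\Psi(x,t)=\Phi_t(x)$ with the implicit-function-theorem argument for $\Psi^{-1}$ gives part (2). The paper treats this proposition as a known preliminary (following Milnor and Nicolaescu) and gives no proof, so your write-up simply supplies the standard details consistent with what the paper assumes.
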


Now let $p$ and $q$ be critical points of indices $\lambda$ and $\lambda - 1$ respectively. Then $\M_q^p$ is a $0$-dimensional compact manifold, that is, a finite set with discrete topology. For each flow line $\bar \gamma \in \M_q^p$ we define its \emph{sign} $\sign \bar \gamma$ to be $\pm 1$ depending on the orientation of a frame at a point $x \in \bar \gamma \cap f^{-1}(r)$, consisting of positively oriented frames of $W_p^- \cap f^{-1}(r)$ and $W_q^+ \cap f^{-1}(r)$ at point $x$ together with $V(x)$. 

\subsection{Cerf's theory}
For a more detailed exposition of Cerf's theory see \cite{Cerf70} and \cite{Sharko93}. 

Let $f, g\colon M \to \R$ be two smooth functions on a closed manifold $M$. They are called \emph{equivalent} if there are diffeomorphisms $R \colon M \to M$ and $L \colon \R \to \R$ such that $f = L \circ g \circ R^{-1}$. A Morse function is called \emph{non-resonant} if all of its critical values are distinct. A Morse function is called \emph{simply resonant} if the number of its distinct critical values differs by one from the number of its critical points. A smooth function $f\colon M \to \R$ is called a \emph{birth-death} function if all of its critical points but one are Morse, all the critical values are distinct, and there is a local coordinate system around the only non-Morse point $p$ in which the function is expressed as
$$
f = f(p) - \left(x^1\right) ^ 2 - \ldots - \left(x^\lambda\right)^2 + \left(x^{\lambda + 1}\right)^2 + \ldots + \left(x^{n - 1}\right) ^ 2 + \left(x^n\right) ^ 3.
$$
\begin{prop}~
\begin{enumerate}
\item Let $f$ be a non-resonant Morse function. Then there is a neighbourhood $U$ of $f$ such that each $g \in U$ is a non-resonant Morse function equivalent to $f$.
\item Let $f$ be a simply resonant Morse function. Then there is a neighbourhood ${U = U_> \sqcup U_= \sqcup U_<}$ of $f$ such that $U_=$ is a codimension-one submanifold of $U$ consisting of simply resonant Morse functions equivalent to $f$, while $U_>$ and $U_<$ are open subsets consisting of equivalent non-resonant Morse functions.
\item Let $f$ be a birth-death function. Then there is a neighbourhood ${U = U_0 \sqcup U_1 \sqcup U_2}$ of $f$ such that such that $U_1$ is a codimension-one submanifold of $U$ consisting of birth-death Morse functions equivalent to $f$, while $U_0$ and $U_2$ are open subsets consisting of equivalent non-resonant Morse functions. $\# \Crit g = \# \Crit f - 1$ for $g \in U_0$ and $\# \Crit g = \# \Crit f + 1$ for $g \in U_2$.
\end{enumerate}
\end{prop}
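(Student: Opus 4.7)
My plan is to treat the three items in parallel, via a local analysis of the function space $C^\infty(M)$ around each kind of function. The common input is the implicit function theorem applied to the section $g \mapsto dg$: for any non-degenerate critical point $p$ of $f$, there is a $C^2$-neighbourhood $U$ of $f$ and a smooth map $\tilde p\colon U \to M$ with $\tilde p(f) = p$ whose image consists of non-degenerate critical points of $g$ with the same Morse index. In cases (1) and (2) this tracks every critical point of nearby $g$; in case (3) it tracks all but the non-Morse one, which requires a separate analysis via the unfolding of the $A_2$ singularity.

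For (1), shrinking $U$ so that the values $g(\tilde p_i(g))$ remain pairwise distinct gives that every $g \in U$ is a non-resonant Morse function. To obtain equivalence with $f$, I would join $f$ and $g$ by a path $f_t$ of non-resonant Morse functions inside $U$, build a family of diffeomorphisms $L_t$ of $\R$ interpolating the critical values, and solve $\partial_t(L_t \circ f_t) + d(L_t \circ f_t)(V_t) = 0$ on the complement of the critical set for a time-dependent vector field $V_t$ vanishing at each $\tilde p_i(f_t)$; integration of $V_t$ produces the source diffeomorphism $R$ with $f = L \circ g \circ R^{-1}$.

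For (2), the same setup gives a smooth function $\Delta(g) = g(\tilde p_i(g)) - g(\tilde p_j(g))$, where $p_i, p_j$ are the coincident critical points of $f$. A bump-perturbation of $g$ supported near $\tilde p_i$ alone shows $d\Delta \neq 0$ on $U$, so $U_{=} := \Delta^{-1}(0)$ is a smooth codimension-one submanifold and $U_<, U_>$ are the resulting open half-spaces; the equivalences inside each piece follow from the Moser construction of (1), adjusted to preserve the (in)equality of the two relevant critical values.

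For (3), the main tool is the universal unfolding of the $A_2$ singularity: after a parametric Morse lemma in the first $n - 1$ coordinates and a splitting-type argument in the $n$-th, any $g$ close to $f$ can be brought, in a neighbourhood of its would-be degenerate point, to the normal form $c(g) + Q(x^1, \ldots, x^{n - 1}) + (x^n)^3 + t(g) \cdot x^n$, where $Q$ is the fixed non-degenerate quadratic and $t\colon U \to \R$ is a smooth submersion (the explicit perturbation $f \mapsto f + \varepsilon x^n$ changes $t$ linearly). The cubic has two non-degenerate critical points for one sign of $t$, a single birth-death point at $t = 0$, and no critical points for the other sign, yielding the decomposition $U = U_2 \sqcup U_1 \sqcup U_0$ with the correct critical counts; combined with (1) away from $p$ it gives the claimed equivalences. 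The main obstacle is the derivation of this normal form, which is noticeably more delicate than the other steps and relies on a preparation-theorem-style parametric Morse lemma permitting a cubic in the degenerate direction.
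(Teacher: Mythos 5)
The paper does not prove this proposition at all: it is quoted as standard background from Cerf's theory, with the reader referred to \cite{Cerf70} and \cite{Sharko93}. So there is nothing to compare line by line; what you propose is essentially a reconstruction of the classical argument (implicit-function-theorem tracking of Morse points, a path/Moser method for equivalence, and the one-parameter unfolding of the $A_2$ germ for the birth--death stratum), and in outline it is the right proof.

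One step is stated in a way that would not work as written: in your Moser argument you propose to solve $\partial_t(L_t\circ f_t) + d(L_t\circ f_t)(V_t) = 0$ only on the complement of the critical set, imposing that $V_t$ vanish at the tracked critical points. Matching the critical values via $L_t$ only makes the numerator $\partial_t(L_t\circ f_t)$ vanish at each critical point to first order, while $d(L_t\circ f_t)$ also vanishes there, so the naive quotient need not extend smoothly (or even continuously) to $M$, and there is no reason for a solution to vanish at the critical points. The standard repair is local: at a non-degenerate critical point the partial derivatives of $L_t\circ f_t$ generate the maximal ideal of germs vanishing there (parametric Morse lemma, or a Hadamard-type argument), so $\partial_t(L_t\circ f_t)$, which lies in that ideal once values are matched, can be written as a smooth combination of the partials, smoothly in $t$; gluing this local solution with the obvious one away from $\Crit$ gives a global smooth $V_t$, whose flow on the compact ball/manifold yields $R$. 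With that correction, and with $U$ taken convex so that the segment from $f$ to $g$ serves as the path (staying in the stratum $U_=$, resp.\ $U_1$, for items (2) and (3)), your sketch gives the statement; the $A_2$ normal form you invoke in (3) is indeed the delicate point, but it is exactly the content of the universal unfolding used by Cerf, and for small $t$ the two newborn critical values differ by $O(|t|^{3/2})$, which keeps the functions in $U_2$ non-resonant and mutually equivalent.
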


Let us denote by $\F_0$ the set of non-resonant Morse functions, by $\F_1^\alpha$ the set of birth-death functions and by $\F_1^\beta$ the set of simply resonant functions. $\F_0$ is an open dense subset of $C^\infty(M, \R)$ (in $C^2$-topology), $\F_1^\alpha$ and $\F_1^\beta$ are codimension-one (Frechet) submanifolds of $C^\infty(M, \R)$.

\begin{prop}
Let $f_0, f_1 \colon M \to \R$ be two non-resonant Morse functions. Then there exists a path $\gamma\colon [0, 1] \to \C^\infty(M, \R)$ such that
\begin{enumerate}
\item $\gamma(0) = f_0$ and $\gamma(1) = f_1$;
\item $\gamma(t)$ is a non-resonant Morse function, simply resonant Morse function or a birth-death function for all $t \in [0, 1]$;
\item $\gamma(t)$ intersects $\F_1^\alpha$ and $\F_1^\beta$ transversally (and thus in a finite number of points). 
\end{enumerate}
Moreover, a generic path satisfying (1) satisfies (2) and (3).
\end{prop}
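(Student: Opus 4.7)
The plan is to argue by transversality in the infinite-dimensional Frechet manifold $C^\infty(M, \R)$. The space $C^\infty(M, \R)$ is a convex (in particular, path-connected) topological vector space, so we can start with any path $\gamma_0\colon [0, 1] \to C^\infty(M, \R)$ from $f_0$ to $f_1$ (for instance the linear interpolation $\gamma_0(t) = (1 - t)f_0 + tf_1$). The task is then to perturb $\gamma_0$ within paths connecting $f_0$ and $f_1$ so that conditions (2) and (3) hold.

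First I would set up the stratification of $C^\infty(M, \R)$. Let $\Sigma = C^\infty(M, \R) \setminus (\F_0 \cup \F_1^\alpha \cup \F_1^\beta)$ be the set of ``bad'' functions. The content of Cerf's local theory (as cited above) is that $\Sigma$ is naturally a countable union of Frechet submanifolds of codimension $\ge 2$: the strata correspond to doubly resonant Morse functions (two coincidences of critical values), to Morse functions simultaneously resonant with the presence of a birth-death point, to functions with two birth-death points, to functions whose most degenerate critical point has normal form $\pm(x^1)^2 \pm \ldots \pm (x^{n-1})^2 + (x^n)^4$, and so on. Each such stratum is cut out by an independent pair of conditions on jets, which is exactly why its codimension is at least $2$.

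Next I would invoke the parametric transversality theorem for maps from the $1$-dimensional manifold $[0, 1]$ into $C^\infty(M, \R)$. Because $\F_1^\alpha$ and $\F_1^\beta$ have codimension $1$, a generic path meets each of them transversally in a finite set of interior points; because every stratum of $\Sigma$ has codimension at least $2$, a generic path avoids $\Sigma$ altogether. The genericity can be arranged by a small perturbation supported away from the endpoints $t = 0$ and $t = 1$, which by the openness of $\F_0$ (and the fact that $f_0, f_1 \in \F_0$) keeps the endpoints fixed and inside $\F_0$. This simultaneously yields conditions (1), (2), (3), and the final ``moreover'' clause.

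The main obstacle is the infinite-dimensional bookkeeping in the previous paragraph: one must know not only that $\F_1^\alpha$ and $\F_1^\beta$ are codimension-one submanifolds of $C^\infty(M, \R)$ (this is stated in the previous proposition), but also that the remaining singular strata form tame Frechet submanifolds of codimension $\ge 2$ for which the parametric transversality theorem is actually applicable. This is precisely Cerf's classification of generic $1$-parameter families of functions, so the cleanest way to carry out the proof is to cite Cerf~\cite{Cerf70} (or the exposition in Sharko~\cite{Sharko93}) for the stratification and then run the one-paragraph transversality argument above. No new analytic input is required beyond what the preceding propositions in this section already assume.
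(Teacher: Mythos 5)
The paper offers no proof of this proposition: it is quoted in the preliminaries as a standard result of Cerf's theory, with the reader referred to \cite{Cerf70} and \cite{Sharko93} at the beginning of the subsection. Your sketch is the standard argument behind that result --- stratify the complement of $\F_0 \cup \F_1^\alpha \cup \F_1^\beta$ into strata of codimension at least two, join $f_0$ to $f_1$ by any path (e.g.\ linear interpolation), and perturb it away from the endpoints so that it misses the codimension-$\ge 2$ strata and meets $\F_1^\alpha$, $\F_1^\beta$ transversally --- and you correctly identify that the substantive input is not this one-paragraph scheme but Cerf's classification of the singular strata and his genericity machinery. One caveat worth keeping in mind: the parametric transversality theorem does not apply verbatim to Frechet manifolds (there is no Sard--Smale theorem in this setting), so the ``invoke parametric transversality'' step is a heuristic gloss; Cerf's actual argument works with finite-codimension conditions on jets and reduces the genericity statement to Thom-type transversality in jet bundles. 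Since you explicitly defer to \cite{Cerf70} (or \cite{Sharko93}) for exactly this point, your treatment is in effect the same as the paper's --- a citation of Cerf --- supplemented by a correct outline of what is being cited, which is an appropriate level of detail for a statement the paper itself does not prove.
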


\begin{defn}
A Morse function $F\colon M \to \R$ on the smooth manifold $M$ with the boundary $\partial M$ is called \emph{non-resonant} if all the critical values of $F$ and of $\res{F}{\partial M}$ are distinct.
\end{defn}

\subsection{The parity of the Morse index}
Let $f\colon U \to \R$ be a Morse function without critical points in the neighbourhood $U$ of $S^{n - 1}$ inside $B_{\mathbf 0}^n(1)$. For each $p \in S^{n - 1}$ the vector $d_p f \ne 0$ and if $p$ is a critical point of $\res{f}{S^{n - 1}}$, then $d_p f = T_p\partial S^{n - 1}$ since otherwise $p$ would be a critical point of $f$. We introduce the following notation:
\begin{align*}
\Crit_\lambda(f) &= \{p \in \Crit f \colon \mu_f(p) = \lambda\};\\
\Crit^{out}(f) &= \{p \in \Crit f \colon \res{d_p f}{T_p^{out}M} > 0\}, \q 
\Crit^{in}(f) = \{p \in \Crit f \colon \res{d_p f}{T_p^{in}M} > 0\};\\
\Crit_\lambda^{out}(f) &= \Crit_\lambda(f) \cap \Crit^{out}(f), \q \Crit_\lambda^{in}(f) = \Crit_\lambda(f) \cap \Crit^{in}(f).
\end{align*}
\begin{prop} \label{parity of the index}
Let $F$ be a smooth extension of $f$ to $B_{\mathbf 0}^n(1)$ such that ${\Crit F = \Morse F = \mathbf 0}$. Then the parity of $\mu_F(\mathbf 0)$ can be retrieved from $f$
\end{prop}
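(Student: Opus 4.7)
The plan is to reduce the problem to a degree computation for the normalised gradient of $f$ on $S^{n-1}$. Equipping $\R^n$ with the Euclidean metric, set $V = \grad F$. Since $\mathbf{0}$ is the unique critical point of $F$, the field $V$ is nowhere zero on $B^n \setminus \{\mathbf{0}\}$; in particular on $S^{n-1}$ we have $V(p) = \grad f(p) \ne 0$, so $V|_{S^{n-1}}$ depends only on $f$. I would then define $\phi_f \colon S^{n-1} \to S^{n-1}$ by $\phi_f(p) = \grad f(p)/|\grad f(p)|$. Its integer degree $d(f) := \deg \phi_f$ is an invariant of $f$ alone.

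Next I would invoke the standard Poincar\'e--Hopf identity: for a smooth vector field on $B^n$ with isolated zeros in the interior and no zeros on the boundary, the degree of the normalised boundary map $\partial B^n \to S^{n-1}$ equals the sum of the local indices of the interior zeros. Applied to $V$, whose only zero is $\mathbf{0}$, this yields $d(f) = \mathrm{ind}_{\mathbf{0}}(V)$. A self-contained justification in our setting uses the radial homotopy $p \mapsto V(p)/|V(p)|$ on the shells $\{|p|=r\}$ for $r \in (0,1]$, which connects $\phi_f$ (at $r=1$) to the normalised Gauss map on an arbitrarily small sphere around $\mathbf{0}$, the latter computing $\mathrm{ind}_{\mathbf{0}}(V)$ by definition.

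Finally, because $\mathbf{0}$ is a non-degenerate critical point of $F$, near $\mathbf{0}$ the field $V$ is well approximated by the linear field $p \mapsto \Hess_{\mathbf{0}}F \cdot p$, whose local index equals $\sign \det \Hess_{\mathbf{0}}F = (-1)^{\mu_F(\mathbf{0})}$. Combining the two computations,
$$
(-1)^{\mu_F(\mathbf{0})} = \mathrm{ind}_{\mathbf{0}}(V) = d(f),
$$
so $\mu_F(\mathbf{0}) \bmod 2$ is indeed retrieved from $f$ as the parity of $d(f)$. The one step that requires genuine (though classical) input is the Poincar\'e--Hopf identity; on $B^n \subset \R^n$ the tangent bundle is trivial, so it reduces cleanly to the homotopy argument above and presents no real obstacle.
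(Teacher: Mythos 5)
Your argument is correct and is essentially the paper's own proof: both compute the degree of the normalised gradient (resp.\ gradient-like) field of $f$ on $S^{n-1}$, homotope it through the nonvanishing field on shells down to a small sphere around $\mathbf 0$, and identify the resulting degree with $\pm(-1)^{\mu_F(\mathbf 0)}$ via the local index of a non-degenerate gradient zero. The only discrepancy is the sign convention (the paper states $\deg v_0 = (-1)^{\mu_F(\mathbf 0)+n}$, you get $(-1)^{\mu_F(\mathbf 0)}$ with the outward orientation), which is immaterial for retrieving the parity.
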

\begin{proof}
Let $V \in C^{\infty}(S^{n - 1}, TB_{\mathbf 0}^n(1))$ be a gradient-like vector field for $f$ (along $S^{n - 1}$). Consider a local coordinate system $(x^1, \ldots, x^n)$ around $\mathbf 0$ from Definition~\ref{gradient-like vector field} and a small sphere $S_0$ given by $\left(x^1\right) ^ 2 + \ldots + \left(x^n\right) ^ 2 = \eps$. Connect $S^{n - 1}$ with $S_0$ by an isotopy $I\colon S^{n - 1} \times [0, 1] \to B_{\mathbf 0}^n(1)$. Then $v_t(x) = V(I(x, t)) / |V(I(x, t))|$ defines a homotopy between $v_1, v_0\colon S^{n - 1} \to S^{n - 1}$ hence $\deg v_1 = \deg v_0$. It can be easily computed that $\deg v_0 = (-1) ^ {\mu_F(\mathbf 0) + n}$, thus $V$ determines the parity of $\mu_F(\mathbf 0)$.
\end{proof}
\begin{rem}
In fact, applying Cerf's theory (and its restatement by Barannikov \cite{Bar94}) one can show that 
$$
(-1)^{\mu_F(\mathbf 0)} = (-1) ^ n + \sum_{\lambda = 0}^{n - 1} (-1) ^ {\lambda} \cdot {\# \Crit_\lambda^{out}(f)} = 1 - \sum_{\lambda = 0}^{n - 1} (-1) ^ {\lambda} \cdot {\# \Crit_\lambda^{in}(f)}
$$
\end{rem}

Sometimes one can retrieve the exact value of $\mu_F(\mathbf 0)$ from $f$, one of the instances of that is provided below.

\begin{prop}
Let $F$ be a smooth extension of $f$ to $B_{\mathbf 0}^n(1)$ such that ${\Crit F = \Morse F = \mathbf 0}$. If a point $p \in \Crit_{n - 1} f$ ($p \in \Crit_0 f$) with the largest (smallest) value of $f(p)$ lies in $\Crit_{n - 1}^{in} f$ ($\Crit_{0}^{out} f$), then $\mu_F(\mathbf 0) = n$ ($\mu_F(\mathbf 0) = 0$).
\end{prop}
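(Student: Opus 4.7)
The plan is to prove that the origin is the global maximum of $F$ on $B^n$, which immediately gives $\mu_F(\mathbf 0) = n$; the claim about $\Crit_0^{out}$ follows by the symmetric argument applied to $-F$. Since $B^n$ is compact and $F$ is continuous, the global maximum exists and is a local maximum of $F$ on $B^n$, so it suffices to classify the candidate local maxima.

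First I plan to show that, besides $\mathbf 0$, any local maximum $p'$ of $F$ on $B^n$ lies in $\Crit_{n - 1}^{out}(f)$. Such a $p'$ cannot be an interior point (else $p' \in \Crit F \setminus \{\mathbf 0\}$, contradicting the hypothesis), so $p' \in S^{n - 1}$. Then $f|_{S^{n - 1}}$ has a local maximum at $p'$, giving $p' \in \Crit_{n - 1}(f)$; moreover $F$ cannot increase along the inward normal from $p'$, so $d_{p'}F$ is non-positive on $T^{in}_{p'}B^n$, and combined with $d_{p'}F \ne 0$ this forces $d_{p'}f$ to be strictly positive on $T^{out}_{p'}B^n$, i.e.\ $p' \in \Crit^{out}(f)$.

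Second, I plan to exhibit a point of $B^n$ on which $F$ strictly exceeds $f(p)$. The incoming hypothesis $p \in \Crit_{n - 1}^{in}(f)$ gives $d_p f|_{T^{in}_p B^n} > 0$, so for a sufficiently small $v \in T^{in}_p B^n$ the point $q \in B^n$ corresponding to $v$ in a local chart around $p$ satisfies $F(q) = f(p) + d_p f(v) + o(|v|) > f(p)$, whence $\max_{B^n} F > f(p)$. Combining this with the previous step and the hypothesis that $p$ realises the maximum of $f$ over $\Crit_{n - 1}(f)$, every candidate $p' \in \Crit_{n - 1}^{out}(f) \subseteq \Crit_{n - 1}(f)$ satisfies $F(p') = f(p') \le f(p) < \max_{B^n} F$, so no boundary point is the global maximum. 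Hence the global maximum is at $\mathbf 0$ and $\mu_F(\mathbf 0) = n$.

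The proof is essentially location analysis of the global extremum combined with sign bookkeeping, so I do not expect a serious obstacle. The one point requiring care is the sign interplay: local maxima on the boundary must be \emph{outgoing}, whereas the \emph{incoming} hypothesis at $p$ is precisely what lets one produce an interior point whose $F$-value strictly exceeds $f(p)$.
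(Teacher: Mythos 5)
Your proof is correct and follows essentially the same route as the paper: both locate the global maximum of $F$ on the compact ball, use the hypothesis that $p$ maximises $f$ over $\Crit_{n-1}(f)$ together with the inward-pointing differential at $p$ to rule out the boundary, and conclude that the maximum sits at the unique interior critical point $\mathbf 0$, whence $\mu_F(\mathbf 0) = n$ (with the bracketed case handled via $-F$ in both). The only cosmetic difference is that you first classify boundary local maxima as outgoing index-$(n-1)$ points, a step the paper bypasses by deriving the contradiction directly at $p$.
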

\begin{proof}
The version in brackets follows from the unbracketed one by taking $-F$ instead of $F$. We prove the version without brackets.

The global maximum $M$ of $F$ is attained somewhere. If it is attained at $S^{n - 1}$, then $F(p) = M$, but $F$ grows on a curve in $B_{\mathbf 0}^n(1)$ emanating from $p$ along $v$ for any $v \in T_p^{in}B_{\mathbf 0}^n(1)$, a contradiction. Thus $M$ is attained at a critical point of $F$ in $B_{\mathbf 0}^n(1) \setminus S^{n - 1}$, but $\Crit F = \{\mathbf 0\}$, so the origin is the point of global maximum of $F$ and since it is non-degenerate, we have $\mu_F(\mathbf 0) = n$.
\end{proof}

\section{Toolbox: metamorphoses of Morse functions}
\label{Toolbox}
\begin{defn}
Let $U\subset B^n_{\mathbf 0}(1)$ be a neighbourhood of $S^{n - 1}$ and $f\colon U \to \R$ be a Morse function without critical points. We say that $f$ \emph{admits index $\lambda$} if there exists a Morse function $F\colon B^n_{\mathbf 0}(1) \to \R$ such that 
\begin{enumerate}
\item $\res{F}{U} = f$,
\item $\Crit F = \{\mathbf 0\}$, and
\item $\mu_F(\mathbf 0) = \lambda$.
\end{enumerate}
\end{defn}

Note that if $f$ admits index $\lambda$, then so does $L \circ f$ for any orientation-preserving diffeomorphism $L$ of $\R$ and $f \circ R^{-1}\colon R(U) \to \R$ for any diffeomorphism $R$ of $B^n_{\mathbf 0}(1)$. 

Note also that if $F\colon M \to \R$ is a Morse function and $A$ is a closed subset of $M$ not meeting $\Crit F$, then a vector field $V \in C^\infty(A, TM)$ satisfying $VF > 0$ can be extended to a gradient-like vector field $\tilde V\in C^{\infty}(M, TM)$ for $F$ and a Riemannian metric $g \in C^{\infty}(A, S^2T^*M)$ along $A$ can be extended to a Riemannian metric $g \in C^\infty(M, S^2T^M)$ adapted to $F$. In particular, if $F\colon B^n_{\mathbf 0}(1) \to \R$ is a Morse function, then we can assume that the Riemannian metric adapted to $F$ is the  standard metric coming from $\R^n$ outside an arbitrary neighbourhood $U$ of $\Crit F$.

\subsection{Flips}
\begin{defn}
Let $F\colon B^n_{\mathbf 0}(1) \to \R$ be a Morse function without critical points and $p \in S^{n - 1}$ be a critical point of $\res{F}{S^{n - 1}}$. A Morse function $\tilde F\colon B^n_{\mathbf 0}(1) \to \R$ is a \emph{flip of $F$ at $p$ of index $\lambda$} if
\begin{enumerate}
\item $\tilde F = F$ outside some neighbourhood $U_p$ of $p$ in $B^n_{\mathbf 0}(1)$;
\item $\grad F(p)$ and $\grad \tilde F(p)$ point in opposite directions;
\item $\res {\tilde F}{S^{n - 1}} = \res{F}{S^{n - 1}}$;
\item $\Crit \tilde F = \{\mathbf 0\}$ and $\mu_{\tilde F}(\mathbf 0) = \lambda$.
\end{enumerate}
\end{defn}

The following statement is a refinement of \cite[Lemma 1]{Bar94} by Barannikov.
\begin{lem} \label{Existence of a flip from down to up}
Let $F\colon B^n_{\mathbf 0}(1) \to \R$ be a Morse function without critical points and $p \in S^{n - 1}$ be a critical point of $f = \res{F}{S^{n - 1}}$ with $\mu_f(p) = \lambda$ and $\grad F (p) \in T_p^{in}B^n_{\mathbf 0}(1)$. Then there exists a flip $\tilde F$ of $F$ at $p$ of index $\lambda$.
\end{lem}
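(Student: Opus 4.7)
The strategy is to modify $F$ locally near $p$ in adapted coordinates so as to create a single Morse critical point of index $\lambda$ in the interior of $B_{\mathbf 0}^n(1)$, and then compose with a diffeomorphism of $B^n_{\mathbf 0}(1)$ fixing $S^{n - 1}$ pointwise so as to move this critical point to the origin.

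First, by the Morse lemma applied to $f = \res{F}{S^{n - 1}}$ at $p$, I pick coordinates $(x^1, \ldots, x^{n - 1})$ on a sphere-neighbourhood of $p$ in which $f = f(p) + Q(x)$ with $Q = -(x^1)^2 - \ldots - (x^\lambda)^2 + (x^{\lambda + 1})^2 + \ldots + (x^{n - 1})^2$, and extend them to coordinates $(x, t)$ on a half-disc neighbourhood $U_p \subset B^n_{\mathbf 0}(1)$ of $p$, with $t \ge 0$ increasing inward from $S^{n - 1}$. The assumption $\grad F(p) \in T_p^{in} B^n_{\mathbf 0}(1)$ is equivalent to $\partial_t F(0, 0) > 0$; writing $F - f(p) - Q(x) = t \cdot a(x, t)$ via Hadamard's lemma and then replacing $t$ by the new coordinate $\tilde t := t \cdot a(x, t)$ (valid since $a(0, 0) > 0$), I may assume $F(x, t) = f(p) + Q(x) + t$ on a smaller half-disc $U_p' \subset U_p$.

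Now fix a smooth $g \colon [0, \infty) \to \R$ with $g(0) = 0$, $g'(0) = -1$, a unique nondegenerate interior minimum at $t_* > 0$, and $g(t) = t$ with all derivatives matching for $t \ge T_0$. Let $\eta \colon [0, \infty) \to [0, 1]$ be a cutoff equal to $1$ on $[0, \delta/2]$ and $0$ on $[\delta, \infty)$, with $\delta$ small enough that $\{|x| \le \delta,\ 0 \le t \le T_0\} \subset U_p'$. Define
$$
F'(x, t) := F(x, t) + \eta(|x|) \cdot \bigl(g(t) - t\bigr)
$$
on $U_p'$ and $F' := F$ elsewhere; since the correction is supported in $\{|x| \le \delta,\ 0 \le t \le T_0\}$, this extension is smooth. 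Conditions (1) and (3) of the flip definition are immediate, while $\partial_t F'(0, 0) = 1 + (g'(0) - 1) = -1 < 0$ gives (2). In the core $|x| \le \delta/2$ one has $F' = f(p) + Q(x) + g(t)$, with a single critical point $(0, t_*)$ whose Hessian is diagonal with $\lambda$ negative and $n - \lambda$ positive entries — hence Morse of index $\lambda$; outside the modification region $F' = F$ has no critical points.

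The main technical obstacle is to rule out extraneous critical points in the transition annulus $\delta/2 < |x| < \delta,\ 0 < t < T_0$; there $dF' = 0$ reduces to
$$
\eta'(|x|)\bigl(g(t) - t\bigr) = -2\sigma |x|, \qquad \eta(|x|)\bigl(1 - g'(t)\bigr) = 1,
$$
with $\sigma \in \{+1, -1\}$ depending on which eigenspace of $Q$ contains the nonzero coordinates $x^i$. I arrange to have no solution by making $\sup|g - t|$ strictly smaller than $\delta/\max|\eta'|$, which forces the magnitude of the left-hand side of the first equation to be strictly below $\delta$ throughout the annulus; concretely, with $\eta$ and $\delta$ fixed, rescaling $g$ as $\eps \cdot g_0(t/\eps)$ for a fixed model $g_0$ and small $\eps > 0$ uniformly shrinks $\sup|g - t|$ below any prescribed threshold. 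To finally place the critical point at the origin, I use that $\mathrm{Diff}(B^n_{\mathbf 0}(1), S^{n - 1})$ acts transitively on the interior (any two interior points are connected by a path, and one flows along a vector field compactly supported in a tubular neighbourhood of the path): pick $R$ with $\res{R}{S^{n - 1}} = \mathrm{id}$ and $R(\mathbf 0) = q$, where $q$ is the critical point of $F'$, and set $\tilde F := F' \circ R$. Then $\tilde F$ is Morse with a unique critical point at $\mathbf 0$ of index $\lambda$, $\res{\tilde F}{S^{n - 1}} = f$ holds since $\res{R}{S^{n - 1}} = \mathrm{id}$, and $\tilde F = F$ outside the enlarged open neighbourhood $\tilde U_p := U_p' \cup \mathrm{supp}(R - \mathrm{id})$ of $p$, completing the construction.
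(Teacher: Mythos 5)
Your construction is correct, and it follows the same overall strategy as the paper's proof: bring $F$ to the normal form $f(p)+Q(x)+t$ on a half-disc collar at $p$ (the paper writes $F(x,y)=f(x)-2y$ on $\{y\le 0\}$), modify it only over a small cap so that the boundary values are untouched, the normal derivative at $p$ reverses sign, and exactly one nondegenerate interior critical point of index $\lambda$ is created, and finally move that point to the origin by a diffeomorphism of the ball that is the identity near $S^{n-1}$. Where you genuinely differ is in how the local modification is produced: the paper enlarges the coordinate patch to a half-infinite cylinder $B^{n-1}_{\mathbf 0}(a)\times[-a,+\infty)$, glues $F$ to the explicit model $G(x,y)=1+Q(x)+(y-1)^2$, and then pulls everything back by a diffeomorphism flattening the graph of the auxiliary function $\tilde y$, so that the new critical point is literally the critical point of $G$; you instead add the compactly supported correction $\eta(|x|)\bigl(g(t)-t\bigr)$ inside the ball and exclude spurious critical points in the transition annulus by the explicit estimate $\sup|g-t|<\delta/\max|\eta'|$, achieved by the rescaling $g=\eps\, g_0(t/\eps)$. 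Your route is more direct and makes quantitative precisely the step that the paper treats tersely (that the interpolating functions $F_1$ and $F_3$ can be arranged to have no critical points besides that of $G$), while the paper's model-function route hands you the new critical point together with an adapted Morse chart, which is occasionally convenient later. Two small points to tighten: state that $g$ has a \emph{unique critical point} on $(0,\infty)$, a nondegenerate minimum --- a unique interior minimum alone would not exclude further critical points of $g$, each of which would produce an extra critical point of $F'$ on the axis $x=0$ --- and order the choices so that, after the $\eps$-rescaling, the box $\{|x|\le\delta,\ 0\le t\le T_0\}$ indeed lies in $U_p'$ (choose $\delta$ first, then $\eps$); with these adjustments the argument is complete.
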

\begin{proof}
Without loss of generality we can assume that $F(p) = 0$. Choose a local coordinate system $(x^1, \ldots, x^{n - 1}, y)$ in a neighbourhood $U_1$ of $p$ in such a way that 
\begin{enumerate}
\item $U_1$ is given by $y \le 0$,
\item $x = (x^1, \ldots, x^{n - 1})$ is a coordinate system in $U_1 \cap S^{n - 1}$ adapted to $p$ (with respect to $f$), and
\item $F(x, y) = f(x) - 2y$ (to satisfy that first choose $\tilde y$ such that (1) and (2) hold, and then define $2y(x, \tilde y) = f(x) - F(x, \tilde y)$).
\end{enumerate}

Let $a > 0$ be such that $U_2 = B_{\mathbf 0}^{n - 1}(a) \times [-a, +\infty)$ satisfies $U_2 \cap \{y \le 0\} \subset U_1$. Note that for $(x, y) \in U_2$ with $y \le 0$ we have
$$
F(x, y) = -\sum_{i = 1}^{\lambda} \left(x^i\right) ^ 2 + \sum_{i = \lambda + 1}^{n - 1} \left(x^i\right) ^ 2 - 2y.
$$
By modifying $F$ on $\{(x, y) \in B_{\mathbf 0}^{n - 1}(a) \times (-a/5, 0) \colon a/5 < |x| < 4a/5\}$ we can obtain a smooth function $F_1$ without critical points such that $F_1(x, y) = F(x, 0)$ for ${2a/5 \le |x| \le 3a/5}$ and ${-a/10 < y \le 0}$. Now we extend this function to the smooth function 
$$
F_2 \colon  B_{\mathbf 0}^{n - 1}(a) \times (-a, 0] \cup \{(x, y) \in U_2 \colon 2a/5 \le |x| \le 3a/5\} \to \R
$$
by $F_2(x, y) = F_2(x, 0)$ for $2a/5 \le |x| \le 3a/5$ and $y > 0$.

Define a function $G\colon B_{\mathbf 0}^{n - 1}(a) \times [0, +\infty)$ by
$$
G(x, y) = 1 - \sum_{i = 1}^{\lambda} \left(x^i\right) ^ 2 + \sum_{i = \lambda + 1}^{n - 1} \left(x^i\right) ^ 2 + (y - 1) ^ 2.
$$
Note that $F_2$ and $G$ satisfy $F_2(x, 0) = G(x, 0)$ for $x \in B_{\mathbf 0}^{n - 1}(a)$ and ${\grad F_2(x, 0) = \grad G(x, 0)}$ for $|x| \le a/5$. So there is a smooth extension $F_3 \colon U_2 \to \R$ of $F_2$ such that
\begin{enumerate}
\item $F_3(x, y) = G(x, y)$ for $|x| \le a/5$ and $y \ge 1/2$ and
\item $\Crit F_3 = \Crit G = \left\{(0, \ldots, 0, 1)\right\}$.
\end{enumerate}
Now let $\tilde y\colon B_{\mathbf 0}^{n - 1}(a) \to \R$ be a smooth function satisfying
\begin{enumerate}
\item $\tilde y(x) = 2$ for $|x| \le a/5$,
\item $\tilde y(x) = 0$ for $|x| \ge a/2$, and
\item $F_3(x, \tilde y(x)) = F(x, 0)$ for all $x \in B_{\mathbf 0}^{n - 1}(a)$.
\end{enumerate}
Take a diffepmorphism $\Phi$ of $U_2$ such that
\begin{enumerate}
\item $\Phi(x, y) = (x, y)$ for $|x| \ge 2a/5$ and
\item $\Phi(x, \tilde y) = (x, 0)$ for all $x \in  B_{\mathbf 0}^{n - 1}(a)$.
\end{enumerate}
The function $F_4(x, y) = F_3(x, \Phi^-1(x, y))$ is almost the one we need. The only difference is that the critical point $q$ of $F_4$ is not at the origin. We set $\tilde F = F_4 \circ \Psi$ where $\Psi$ is a diffeomorphism of $B_{\mathbf 0}^n$ that maps $\mathbf 0$ to $q$ and is the identity near $S^{n - 1}$.
\end{proof}

\begin{cor} \label{Existence of a flip from up to down}
Let $F\colon B^n_{\mathbf 0}(1) \to \R$ be a Morse function without critical points and ${p \in S^{n - 1}}$ be a critical point of $f = \res{F}{S^{n - 1}}$ with $\mu_f(p) = \lambda - 1$ and $\grad F (p) \in T_p^{out}B^n_{\mathbf 0}(1)$. Then there exists a flip $\tilde F$ of $F$ at $p$ of index $\lambda$.
\end{cor}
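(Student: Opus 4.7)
The plan is to deduce the corollary from Lemma~\ref{Existence of a flip from down to up} by the involution $F \mapsto -F$, which exchanges the two hypotheses (inward versus outward gradient at $p$) while simultaneously complementing Morse indices.

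First, I would observe that $-F$ is again a Morse function without critical points on $B^n_{\mathbf 0}(1)$, and $\grad(-F)(p) = -\grad F(p) \in T_p^{in}B^n_{\mathbf 0}(1)$ since by hypothesis $\grad F(p)$ points outward. Moreover, at any critical point of the restriction $-f = -F|_{S^{n-1}}$, the Hessian changes sign, so the Morse index at $p$ is complemented within the $(n-1)$-dimensional sphere:
\[
\mu_{-f}(p) = (n-1) - \mu_f(p) = (n-1) - (\lambda - 1) = n - \lambda.
\]

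Next, I would apply Lemma~\ref{Existence of a flip from down to up} to the function $-F$ and the critical point $p$ with index parameter $n - \lambda$. This produces a Morse function $G\colon B^n_{\mathbf 0}(1) \to \R$ such that $G = -F$ outside some neighbourhood $U_p$ of $p$, $G|_{S^{n-1}} = -f$, $\grad G(p)$ and $\grad(-F)(p)$ point in opposite directions, and $\Crit G = \{\mathbf 0\}$ with $\mu_G(\mathbf 0) = n - \lambda$. I would then set $\tilde F = -G$.

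Finally, I would check the four defining properties of a flip for $\tilde F$: outside $U_p$, $\tilde F = -G = F$; the restriction $\tilde F|_{S^{n-1}} = -G|_{S^{n-1}} = f$; $\grad \tilde F(p) = -\grad G(p)$, and since $\grad G(p)$ is antiparallel to $\grad(-F)(p) = -\grad F(p)$ it is parallel to $\grad F(p)$, so $\grad \tilde F(p)$ is antiparallel to $\grad F(p)$; and $\Crit \tilde F = \Crit G = \{\mathbf 0\}$ with $\mu_{\tilde F}(\mathbf 0) = n - \mu_G(\mathbf 0) = n - (n - \lambda) = \lambda$. There is no substantive obstacle—the entire content is in the sign-reversal bookkeeping, which is why this statement is phrased as a corollary rather than a lemma.
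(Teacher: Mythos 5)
Your proof is correct and is exactly the argument the paper gives: apply Lemma~\ref{Existence of a flip from down to up} to $-F$ (whose gradient at $p$ points inward and whose restricted index is $n-\lambda$) and negate the resulting flip, checking that the index at the origin becomes $n-(n-\lambda)=\lambda$. Your version just spells out the sign and index bookkeeping that the paper leaves implicit.
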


\begin{proof}
It follows from the proposition that there exists a flip $\widetilde{-F}$ of $-F$ at $p$ of index $n - \lambda$. Then $-\left(\widetilde{-F}\right)$ is a flip for $F$ at $p$ of index $\lambda$.
\end{proof}

\subsection{Standard births}
\begin{defn}
Let $F\colon B^n_{\mathbf 0}(1) \to \R$ be a Morse function, $V \in C^{\infty}(S^{n - 1}, TS^{n - 1})$ be a Morse--Smale  vector field adapted to $f = \res{F}{S^{n - 1}}$, $p \in S^{n - 1}$ be a regular point of $f$, $U_p$ be a neighbourhood of $p$ in $B^n_{\mathbf 0}(1)$, $\gamma = \gamma_V^p$ be the flow line through $p$, and $\eps > 0$ be such that $\gamma(t) \in U_p$ for $|t| < 3\eps$. We say that a Morse function $\tilde F\colon B^n_{\mathbf 0}(1) \to \R$ is \emph{obtained from $F$ by a standard birth in $U_p$ of index $\lambda$} if there exists a Morse-Smale vector field $\tilde V \in C^{\infty}(S^{n - 1}, TS^{n - 1})$ for $\tilde f = \res{\tilde F}{S^{n - 1}}$ such that
\begin{enumerate}
\item $\tilde F = F$ and $\tilde V = V$ outside $U_p$;
\item $\Crit \tilde f = \Crit f \cup \{p_-, p_+\}$ where $p_{\pm} = \gamma(\pm \eps) \in U_p$;
\item $\mu_{\tilde f}(p_+) = \lambda$, $\mu_{\tilde f}(p_-) = \lambda + 1$, $\gamma^p_{\tilde V}(-\eps, \eps)$ is the unique flow line between $p_+$ and $p_-$, and $\im \gamma^{\gamma(-2\eps)}_{\tilde V} \cup \im \gamma^p_{\tilde V} \cup \im \gamma^{\gamma(2\eps)}_{\tilde V} = \im \gamma_V^p$.
\item $\grad F(x)$ and $\grad \tilde F(x)$ both lie in either $T_x^{in}{B^n_{\mathbf 0}(1)}$ or $T_x^{out}{B^n_{\mathbf 0}(1)}$ for any $x \in S^{n - 1}$.
\end{enumerate}
\end{defn}
The construction we present here is essentially the one known classically and explained by Cerf in \cite[III.1]{Cerf70}. The only difference is that we have an additional dimension, that is, we need to extend the modification of a function on $S^{n - 1}$ to its tubular neighbourhood. This is done straightforwardly, yet we write the construction in some detail as we later need its additional property, namely, that one can relate standard births at two points on the same flow line.

\begin{lem} \label{Existence of standard birth}
Let $F\colon B^n_{\mathbf 0}(1) \to \R$ be a Morse function, $V \in C^{\infty}(S^{n - 1}, TS^{n - 1})$ be a Morse--Smale vector field adapted to $f = \res{F}{S^{n - 1}}$, $p \in S^{n - 1}$ be a regular point of $f$ with $\grad F(p) \notin T_pS^{n - 1}$, and $\lambda \in \{0, \ldots, n - 2\}$ be a number. Then for any sufficiently small neighbourhood $U_p$ of $p$ and sufficiently small $\eps > 0$ there exists a Morse function $\tilde F\colon B^n_{\mathbf 0}(1) \to \R$ obtained from $F$ by a standard birth in $U_p$ of index $\lambda$.
\end{lem}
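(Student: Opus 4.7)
The plan is to perform a standard Cerf-type birth on $f = \res{F}{S^{n-1}}$ along a short segment of the flow line $\gamma = \gamma_V^p$, then extend the modification transversally into the ball in a way that creates no new interior critical points.

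First I would set up convenient coordinates. Since $p$ is a regular point of $f$ and $V$ is gradient-like for $f$, one can pick flow-box coordinates $(x^1,\ldots,x^{n-1})$ on a neighbourhood of $p$ in $S^{n-1}$ with $p = 0$, $V = -\partial/\partial x^1$, $f(x) = f(p) - x^1$, and $\gamma(t) = (t, 0, \ldots, 0)$. Extend by a transverse coordinate $y$ so that $\{y = 0\} = S^{n-1}$ and $\{y \le 0\}$ is the ball; the assumption $\grad F(p) \notin T_pS^{n-1}$ gives $\partial_y F(p) \ne 0$, and the coordinate change from the proof of Lemma~\ref{Existence of a flip from down to up} (setting $c \cdot y_{\mathrm{new}} := F - f$ with an appropriate constant $c \ne 0$) makes $F(x, y) = f(p) - x^1 + c y$ throughout some neighbourhood $U_p$ of $p$.

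Now let $\psi : \R \to \R$ be a smooth function with $\psi(t) = -t$ for $|t| \ge 2\eps$, whose derivative vanishes only at $\pm\eps$ with $\psi''(\eps) > 0$ and $\psi''(-\eps) < 0$ (a smoothed cubic). Let $\chi(x) \in [0, 1]$ be a cut-off supported in $U_p \cap S^{n-1}$ that equals $1$ near $\gamma([-2\eps, 2\eps])$, with transverse width to be chosen later. Set
$$
\tilde f(x) = f(p) - x^1 + \chi(x)\left[\psi(x^1) + x^1 - \sum_{i=2}^{\lambda + 1}(x^i)^2 + \sum_{i=\lambda + 2}^{n - 1}(x^i)^2\right],
$$
and $\tilde F(x, y) = \tilde f(x) + c y$ on $U_p$, $\tilde F = F$ elsewhere. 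Where $\chi = 1$ we have $\tilde f - f(p) = \psi(x^1) - \sum_{i=2}^{\lambda+1}(x^i)^2 + \sum_{i=\lambda+2}^{n-1}(x^i)^2$, whose critical points are exactly $p_\pm = (\pm\eps, 0, \ldots, 0) = \gamma(\pm\eps)$ with Hessians $\mathrm{diag}(\psi''(\pm\eps), -2, \ldots, -2, +2, \ldots, +2)$, giving Morse indices $\lambda$ and $\lambda + 1$ respectively. A direct computation of $\partial \tilde f / \partial x^1$ and $\partial \tilde f / \partial x^i$ shows that by choosing the transverse width of $\chi$ sufficiently small (depending on $\eps$ and the $C^1$-size of the bracketed perturbation) no extra critical points appear in the transition zone. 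Since $\partial_y \tilde F \equiv c$ throughout $U_p$, $\tilde F$ is Morse with no new interior critical points, and the $y$-component of $\grad \tilde F$ along $S^{n-1}$ matches that of $\grad F$, which gives condition~(4).

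Finally, take $\tilde V = \grad \tilde f$ (Euclidean gradient in the flow-box) on $U_p \cap S^{n-1}$ and $\tilde V = V$ elsewhere, smoothly interpolated via the extension mechanism recalled just before Lemma~\ref{Existence of a flip from down to up} so that $\tilde V \tilde f > 0$ off $\{p_-, p_+\}$. In the core model, the unique trajectory from $p_-$ to $p_+$ is the $x^1$-axis segment joining them, and the unstable manifold of $p_-$ (tangent at $p_-$ to the span of $\partial_{x^1}, \ldots, \partial_{x^{\lambda+1}}$) meets the stable manifold of $p_+$ (tangent at $p_+$ to the span of $\partial_{x^1}, \partial_{x^{\lambda+2}}, \ldots, \partial_{x^{n-1}}$) transversally along this segment. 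Outside $U_p$, the new invariant manifolds of $p_\pm$ follow pieces of the $V$-invariant manifolds of the old critical points of $f$ along $\gamma$, so transversality with the other stable/unstable manifolds is inherited from the Morse--Smale property of $V$. The main technical obstacle is exactly this global Morse--Smale condition; any residual failure of transversality away from $\{p_-, p_+\}$ is fixed by a generic $C^\infty$-small perturbation of $\tilde V$ supported outside a neighbourhood of $\{p_-, p_+\}$ (Smale's theorem), which preserves conditions~(1)--(4) and the required decomposition of $\im \gamma_V^p$ in item~(3).
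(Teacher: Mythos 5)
Your overall strategy is the same as the paper's: put flow-box coordinates along $\gamma$, perform a one-dimensional cubic-type birth in the flow direction, multiply by a fixed transverse quadratic form of signature $(\lambda, n-2-\lambda)$, cut off, and keep the normal derivative $\partial_y\tilde F$ equal to that of $F$ so that no interior critical points appear and condition (4) holds. However, the key ingredient as you wrote it does not exist: if $\psi$ is smooth, $\psi(t)=-t$ for $|t|\ge 2\eps$, and $\psi'$ vanishes only at $\pm\eps$, then $\psi'<0$ on $(-2\eps,-\eps)$ and on $(\eps,2\eps)$, and $\psi'$ has constant sign on $(-\eps,\eps)$; the only non-degenerate possibility is $\psi'>0$ there, which forces $\psi''(-\eps)\ge 0$ and $\psi''(\eps)\le 0$ --- the opposite of your requirement $\psi''(\eps)>0$, $\psi''(-\eps)<0$. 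With the forced signs your model gives $\mu_{\tilde f}(\gamma(-\eps))=\lambda$ and $\mu_{\tilde f}(\gamma(+\eps))=\lambda+1$, i.e.\ the indices land at the opposite points from what the definition of a standard birth demands ($\mu_{\tilde f}(p_+)=\lambda$, $\mu_{\tilde f}(p_-)=\lambda+1$). The source of the trouble is your normalisation $f=f(p)-x^1$, i.e.\ parametrising $\gamma$ so that $f$ decreases; in a birth the value at the index-$(\lambda+1)$ point is the larger one, so with that parametrisation the index-$(\lambda+1)$ point is forced to sit at $\gamma(+\eps)$. The paper's own proof implicitly takes coordinates in which $f$ \emph{increases} along $\gamma$ ($f=x^{n-1}$, $\gamma(t)=(0,\dots,0,t,0)$), and then the bookkeeping comes out as in the definition. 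So your argument is repairable by a one-line change (take $f=f(p)+x^1$ in the flow box and $\psi(t)=t$ outside, with $\psi''(-\eps)<0<\psi''(\eps)$, or equivalently relabel $p_\pm$), but as written the central step fails and the index assignment is wrong.

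Two smaller points. First, your claim that the transition zone of $\chi$ contributes no critical points is asserted, not proved, and it is not merely a matter of shrinking the transverse width: the bracketed perturbation is of size comparable to $\eps$ in the flow direction, so you need a cut-off in both the flow and transverse variables and an estimate on both components of the differential there (the paper avoids this by writing $\tilde F=G\circ\Phi^{-1}$ with an explicit diffeomorphism $\Phi$ and interpolating only the coefficient of the linear term). Second, your final repair of the Morse--Smale condition by a generic perturbation ``supported outside a neighbourhood of $\{p_-,p_+\}$'' must in fact be supported \emph{inside} $U_p$ and away from the distinguished flow lines, since the definition requires $\tilde V=V$ outside $U_p$ and prescribes the flow lines through $\gamma(\pm 2\eps)$ and $p$; this is routine but should be said correctly (the paper itself simply takes $\tilde V=\grad\tilde f$ and leaves the verification implicit).
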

\begin{proof}
Without loss of generality we can assume that $F(p) = 0$. Choose a local coordinate system $(x^1, \ldots, x^{n - 1}, y)$ in a neighbourhood $U_1$ of $p$ such that
\begin{enumerate}
\item $U_1$ is given by $y \le 0$;
\item $F(x^1, \ldots, x^{n - 1}, y) = x^{n - 1} + \sigma y$ with $\sigma = 1$ if $\grad F(p) \in T_p^{out}B^n_{\mathbf 0}(1)$ and $\sigma = -1$ if ${\grad F(p) \in T_p^{in}B^n_{\mathbf 0}(1)}$;
\item the flow line through $p$ is given by $\gamma(t) = (0, \ldots, 0, t, 0)$.
\end{enumerate}

Let $a > 0$ be such that $U_2 = B_{\mathbf 0}^{n - 1}(a) \times (-a, a)$ satisfies $U_2 \cap \{y \le 0\} \subset U_1$. Let $omega\colon \R \to \R$ be a smooth function that is equal to $1$ for $|x| < a/4$, equal to $0$ for $|x| > a/2$, symmetric with respect to $0$ and monotone on $[0, +\infty)$. Define a function
$$
G(x, y) = -\sum_{i = 1}^\lambda \left(x^i\right) ^ 2 + \sum_{i = \lambda + 1}^{n - 2} \left(x^i\right)^2  + \left(x ^ {n - 1}\right)^3 + (1 - 2\omega(|(x, y)|))\eps_1 x^{n - 1} + \sigma y
$$
and a diffeomorphism 
$$
\Phi(x, y) = (x^1, \ldots, x^{n - 2}, -\sum_{i = 1}^\lambda \left(x^i\right) ^ 2 + \sum_{i = \lambda + 1}^{n - 2} \left(x^i\right)^2  + \left(x ^ {n - 1}\right)^3 + \eps_1 x^{n - 1}, y),
$$
where $\eps_1 = 3 \left(\frac{3 \eps}{4}\right) ^ {2/3}$.

Then $\tilde F = G \circ \Phi^{-1}$ and $\tilde f = \res{\tilde F}{\{y = 0\}}$ have the following properties
\begin{enumerate}
\item $\tilde F(x, y) = x^{n - 1} + \sigma y = F(x, y)$ if $|x| > a/2$ or $|y| > a/2$;
\item $\Crit \tilde F = \varnothing$;
\item $\Crit \tilde f = \{p_+, p_-\} = \{(0, \ldots, 0, \eps, 0), (0, \ldots, 0, -\eps, 0)\}$; $\mu_{\tilde f} (p_+) = \lambda$, and ${\mu_{\tilde f}(p_-) = \lambda + 1}$;
\item $\frac{\partial \tilde F}{\partial y}(x, y) = \sigma = \frac{\partial F}{\partial y}(x, y)$ for all $(x, y) \in U_2$.
\end{enumerate}
Thus $\tilde F$ and $\tilde V = \grad \tilde f$ are the desired function and vector field. 
\end{proof}

\begin{rem}
From the construction it follows that $\tilde f$ and $\tilde V$ depend only on $f$ and $V$ and not on the extension of $f$ to $F$.
\end{rem}

\subsection{Connecting two Morse functions}
First we prove a technical lemma that allows us to obtain a Morse function without critical points in a neighbourhood of $S^{n - 1}$ inside $\R^n$ from two Morse functions without critical points in neighbourhoods of $S^{n - 1}$ inside $\{x \in \R^{n} \colon |x| \le 1\}$ and $\{x \in \R^n \colon |x| \ge 1\}$.

\begin{lem} \label{Smoothing in a neighbourhood of the sphere}
Let $F$ be a continuous function on $S_\eps = \{x \in \R^n \colon 1-\eps < |x| < 1 + \eps\}$ which is a Morse function without critical points on $S_\eps^+ = \{x \in \R^n \colon 1 \le |x| < 1 + \eps\}$ and $S_\eps^- = \{x \in \R^n \colon 1 - \eps < |x| \le 1\}$. Suppose that $\grad\left( \res{F}{S_\eps^+}\right)(p) = \grad\left( \res{F}{S_\eps^-}\right)(p)$ for all $p \in \Crit \res{F}{S^{n - 1}}$. Then there exists a Morse function without critical points $\tilde{F}$ on $S_\eps$ such that $\tilde F$ is equal to $F$ in a neighbourhood of $\partial S_\eps$.
\end{lem}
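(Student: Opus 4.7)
The plan is to construct $\tilde F$ by smoothly interpolating between $F_- := \res{F}{S_\eps^-}$ and $F_+ := \res{F}{S_\eps^+}$ across $S^{n-1}$ using a cutoff in the radial variable. I work in polar coordinates $(r, \theta) \in (1 - \eps, 1 + \eps) \times S^{n - 1}$. Since each $F_\pm$ is smooth on a manifold with boundary $S^{n - 1}$, it admits a smooth extension $F_\pm^{\mathrm{ext}}$ past $S^{n - 1}$ by standard one-sided (Seeley) extension. I then fix a smooth cutoff $\chi \colon \R \to [0, 1]$ with $\chi(r) = 0$ for $r \le 1 - \delta$ and $\chi(r) = 1$ for $r \ge 1 + \delta$, where $\delta > 0$ is small and to be chosen at the end, and put
$$
\tilde F(r, \theta) = (1 - \chi(r))\, F_-^{\mathrm{ext}}(r, \theta) + \chi(r)\, F_+^{\mathrm{ext}}(r, \theta).
$$
By construction $\tilde F \in C^\infty(S_\eps)$ and $\tilde F = F$ on $\{|r - 1| \ge \delta\}$, so it remains only to check that $\tilde F$ has no critical points inside the interpolation zone $\{|r - 1| < \delta\}$.

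Writing $a(\theta) := \partial_r F_-(1, \theta)$, $b(\theta) := \partial_r F_+(1, \theta)$, $\tau(\theta) := \nabla_\theta F(1, \theta)$, and splitting $\grad \tilde F$ into its radial and tangential components yields
\begin{align*}
\nabla_\theta \tilde F &= (1 - \chi)\, \nabla_\theta F_-^{\mathrm{ext}} + \chi\, \nabla_\theta F_+^{\mathrm{ext}}, \\
\partial_r \tilde F &= (1 - \chi)\, \partial_r F_-^{\mathrm{ext}} + \chi\, \partial_r F_+^{\mathrm{ext}} + \chi'(r)\bigl(F_+^{\mathrm{ext}} - F_-^{\mathrm{ext}}\bigr).
\end{align*}
A Taylor expansion at $r = 1$ gives $F_+^{\mathrm{ext}}(r, \theta) - F_-^{\mathrm{ext}}(r, \theta) = (r - 1)(b - a)(\theta) + O((r - 1)^2)$, so the extra $\chi'$ term is uniformly controlled on the interpolation zone by a bound of order $|b - a|(\theta) + \delta$.

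The set $C := \Crit \res{F}{S^{n - 1}}$ is finite, and I finish by a case split on a small neighborhood $U$ of $C$ in $S^{n - 1}$. On $U$: for each $\theta_0 \in C$ the hypothesis forces $a(\theta_0) = b(\theta_0)$, and this value is nonzero because $\tau(\theta_0) = 0$ while $F_-$ has no critical points at $(1, \theta_0)$; continuity then makes both $|b - a|$ and $|a - a(\theta_0)|$ small on $U$, so $\partial_r \tilde F \approx a(\theta_0) \ne 0$ throughout the interpolation zone above $U$ for $\delta$ small. On $S^{n - 1} \setminus U$: $|\tau|$ is bounded below by compactness, and since $\nabla_\theta F_\pm^{\mathrm{ext}}$ is close to $\tau$ for $r$ near $1$, the tangential gradient $\nabla_\theta \tilde F$ is bounded away from zero for $\delta$ small. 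Shrinking $\delta$ and $U$ accordingly yields $\grad \tilde F \ne 0$ throughout $S_\eps$.

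The hard part, and the precise reason the hypothesis $a = b$ on $C$ is imposed, is controlling the extra term $\chi'(F_+^{\mathrm{ext}} - F_-^{\mathrm{ext}})$ in the radial derivative: without it, $\partial_r \tilde F$ could be driven to zero exactly above the critical points of $\res{F}{S^{n - 1}}$, where the tangential gradient also vanishes. The assumption forces $b - a$ to vanish on $C$, keeping this term small near those critical points and preserving the nonvanishing radial derivative there.
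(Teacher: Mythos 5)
Your proposal is correct and follows essentially the same route as the paper: one-sided smooth extensions of $\res{F}{S_\eps^\pm}$, a radial cutoff/partition-of-unity blend with $|\chi'| = O(1/\delta)$, and a two-case check in which the tangential gradient handles points away from $\Crit \res{F}{S^{n-1}}$ while near those points the hypothesis $\grad(\res{F}{S_\eps^+})(p) = \grad(\res{F}{S_\eps^-})(p)$ keeps the cutoff term $\chi'\,(F_+^{\mathrm{ext}} - F_-^{\mathrm{ext}})$ small and the radial derivative nonzero. The paper localizes the bad set via Morse equivalence of the spherical convex combinations rather than your compactness bound off a neighbourhood $U$ of the critical set, but this is a presentational difference, not a different argument.
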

\begin{proof}
Let $F^\pm$ be smooth extensions of $F$ from $S_\eps^\pm$ to $S_\eps$ such that ${\grad\left( \res{F}{S_\eps^\pm}\right)(x) = \grad\left(F^\pm\right)(x)}$ for all $x \in S^{n - 1}$. Taking a smaller $\eps$ we can assume that $F^{\pm}$ are Morse functions without critical points. Let ${c = \min_{y \in S^{n - 1}} \min\left\{\left|\grad \res{F}{S_\eps^+} (y)\right|, \left|\grad \res{F}{S_\eps^-} (y)\right|\right\}}$, for $p \in \Crit \res{F}{S^{n - 1}}$ let $U_p$ be a small cap on $S^{n - 1}$ around $p$ such that $\left|\grad F^{+}(x) - \grad F^{-}(x)\right| < c/100$ and $\left|\grad F^{\pm}(x) - \grad F^{\pm}(p)\right| < c/100$ for any $x \in U_p$. Pick $\delta > 0$ such that the following conditions are satisfied
\begin{enumerate}
\item $F_{\alpha^+, \alpha^-}^t = \alpha^+ \res{F^+}{tS^{n - 1}} + \alpha^- \res{F^-}{tS^{n - 1}}$ is a Morse function equivalent to $\res{F}{S^{n - 1}}$ for any ${t \in [1 - \delta, 1 + \delta]}$ and $\alpha^+, \alpha^- > 0$ with $\alpha^+ + \alpha^- = 1$ and for any $q \in \Crit F_{\alpha^+, \alpha^-}^t$ the point $q/|q|$ lies in $U_p$ for the corresponding $p \in \Crit \res{F}{S^{n - 1}}$;
\item $\left|\grad F^{\pm} (x) - \grad F^{\pm} (x/|x|) \right| < c/100$ for any any $x \in S_{\delta}$.
\end{enumerate}
Take a partition of unity $\{\ph^+, \ph^-\}$ on $S_\eps$ subordinate to the open cover ${\big\{\{x \in \R^n \colon 1 - \delta \le |x| \le 1 + \eps\}, \{x \in \R^n \colon 1 - \eps \le |x| \le 1 + \delta\}\big\}}$ such that $\ph^+$ constant on each $|x|S^{n-1}$, decreases in $|x|$ and $\left|\grad \ph^+\right| < 2/\delta$. We define $\tilde F = \ph^+F^+ + \ph^-F^-$. We need to prove that $\tilde F$ has no critical points. 

Consider $x \in S_\eps$. If $x \notin S_\delta$, then $\tilde F$ coincides with $F^+$ or $F^-$ in a neighbourhood of $x$, therefore, $x$ is not a critical point of $\tilde F$. Now let $x \in S_\delta$. If $x$ is not a critical point of $\res{\tilde F}{|x|S^{n - 1}}$, then $x$ is obviously not a critical point of $\tilde F$. If $x \in \Crit \res{\tilde F}{|x|S^{n - 1}}$, then let $p$ be the corresponding critical point of $\res{F}{S^{n - 1}}$. We have
\begin{align*}
\left|\grad \tilde F(x)\right| &= \left|\grad F^-(x) + (F^+(x) - F^-(x)) \cdot \grad \ph^+ (x)  + \ph^+(x) \cdot (\grad F^+ (x) - \grad F^- (x))\right|\\
&\ge \left| \grad F^-(p) \right| - \left| \grad F^-(p) - \grad F^-(x/|x|) \right| - \left| \grad F^-(x) - \grad F^-(x/|x|) \right|\\
& - \left| \grad \ph^+ (x) \right| \cdot \left| x - x/|x|\right| \cdot \max_{y \in [x, x/|x|]} \left| \grad F^+(y) - \grad F^-(y)\right|\\
&- \left| \grad F^+ (x) - \grad F^- (x)\right|
\end{align*}
For $y \in [x, x/|x|]$ we have 
\begin{align*}
\left| \grad F^+(y) - \grad F^-(y)\right| &\le \left| \grad F^+(y) - \grad F^+(y/|y|)\right|\\
&+ \left| \grad F^+(x/|x|) - \grad F^-(x/|x|)\right| + \left| \grad F^-(y/|y|) - \grad F^-(y)\right|\\
&\le 3c/100
\end{align*}
so
$$
\left|\grad \tilde F(x)\right| \ge c - c/100 - c/100 - \frac{2}{\delta} \cdot \delta \cdot \frac{3c}{100} - 3c/100 > 0
$$
thus $x$ is not a critical point of $\tilde F$.
\end{proof}

Now we connect two Morse functions in such a way that the result will satisfy the assumptions of the previous lemma.
\begin{lem} \label{Connecting two Morse functions}
Let $f_1, f_2\colon S_{\eps}^{-} \to \R$ be two Morse functions without critical points. Suppose that $\Crit \res{f_1}{S^{n - 1}} = \Crit \res{f_2}{S^{n - 1}} = C$, $f_1(p) \ne f_2(p)$ for each $p \in C$, and the vectors $\grad f_i(p)$ both point out if $f_2(p) > f_1(p)$ and point in if $f_2(p) < f_1(p)$. If there exists a smooth vector field $V \in C^{\infty}(S^{n - 1}, TS^{n - 1})$ such that $Vf_i(x) > 0$ for any $x \in S^{n - 1} \setminus C$, then there exists a Morse function without critical points $F\colon \{x \in \R^n \colon 1 \le |x| \le 2\} = A \to \R$ such that
\begin{enumerate}
\item $F(i\cdot x) = f_i(x)$ for any $x \in S^{n - 1}$;
\item $i \cdot \grad F (i \cdot p) = \grad f_i(p)$ for any $p \in C$;
\end{enumerate} 
\end{lem}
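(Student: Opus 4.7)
The plan is to construct $F$ on the annulus $A=\{1\le|x|\le 2\}$ (with polar coordinates $(r,x)$, $r\in[1,2]$, $x\in S^{n-1}$) by perturbing the convex combination $\bar F(rx)=(2-r)f_1(x)+(r-1)f_2(x)$ in thin collars near the boundary spheres. The baseline $\bar F$ satisfies $\bar F|_{r=1}(x)=f_1(x)$ and $\bar F|_{r=2}(2x)=f_2(x)$ and has no critical points: at a hypothetical critical point $(r_0,x_0)$ the radial derivative is $f_2(x_0)-f_1(x_0)$, while pairing the tangential gradient with $V$ yields $(2-r_0)Vf_1(x_0)+(r_0-1)Vf_2(x_0)$; the latter forces $x_0\in C$ since its coefficients lie in $[0,1]$ and sum to $1$ while $Vf_i>0$ off $C$, but then $f_1(x_0)\ne f_2(x_0)$ contradicts the former.

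To install the prescribed radial derivatives at $C$, I would set
$$F(rx) = \bar F(rx) + \psi_1(r)h_1(x) + \psi_2(r)h_2(x),$$
where $\psi_1$ is supported in $[1,1+\delta]$ with $\psi_1(1)=0$, $\psi_1'(1)=1$, and symmetrically $\psi_2$ is supported in $[2-\delta,2]$ with $\psi_2(2)=0$, $\psi_2'(2)=-1$; the smooth functions $h_i\colon S^{n-1}\to\R$ are chosen locally constant in a neighborhood $U$ of $C$ with prescribed values $h_1(p)=\partial_rf_1(p)-(f_2(p)-f_1(p))$ and $h_2(p)=(f_2(p)-f_1(p))-\tfrac12\partial_rf_2(p)$ at each $p\in C$, and supported in a slightly larger neighborhood $U'$. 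Direct computation then gives $F(x)=f_1(x)$, $F(2x)=f_2(x)$, and (using that the tangential gradients of $f_i$ vanish at $C$) the matching conditions $\grad F(p)=\grad f_1(p)$ and $2\grad F(2p)=\grad f_2(p)$ at $p\in C$; the Morse property of $F|_{\partial A}$ is inherited from that of $f_i|_{S^{n-1}}$.

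The main obstacle is verifying that $F$ stays critical-point free in the two collars. For points $(r,x)$ with $x$ outside $U$, the lower bound $V\bar F\ge c>0$ on the tangential directional derivative dominates the correction $|\psi_i Vh_i|\le\delta\sup|Vh_i|$ once $\delta$ is small. Near $C$, where $h_i$ is locally constant and contributes only to the radial direction, the radial derivative along the line through $p\in C$ in the inner collar equals
$$\partial_r F(r,p) = (1-\psi_1'(r))\,K + \psi_1'(r)\,\partial_rf_1(p), \qquad K:=f_2(p)-f_1(p),$$
a convex combination of two quantities of the same sign (by the hypothesis on $\grad f_i(p)$) whenever $\psi_1'(r)\in[0,1]$. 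The remaining trouble is that $\psi_1$ must return to zero at $r=1+\delta$ while $\psi_1'(1)=1$, so $\psi_1'$ must be negative somewhere. I would resolve this by choosing $\psi_1(r)=(r-1)\rho((r-1)/\delta)$ for a smooth bump $\rho$ with $\rho(0)=1$, $\rho(1)=0$ engineered so that $\psi_1'(r)=\rho(s)+s\rho'(s)$ (with $s=(r-1)/\delta$) attains minimum within $[-\eps,0)$ for an arbitrarily small prescribed $\eps$ — made possible by having $\rho$ drop quickly from $1$ near $s=0$ and then decay gently to $0$. For $\eps$ small relative to the finitely many sign-cancellation thresholds determined by the points of $C$, the radial derivative keeps its sign throughout the inner collar, and the symmetric construction handles $\psi_2$ near $r=2$.
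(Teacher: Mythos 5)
Your argument is correct, and it realizes the lemma by a different mechanism than the paper. Both proofs take the convex combination $(2-|x|)f_1(x/|x|)+(|x|-1)f_2(x/|x|)$ as the backbone and rule out critical points by the same dichotomy: near $C$ one controls the radial derivative using the hypothesis that $f_2(p)-f_1(p)$ and the radial components of $\grad f_1(p)$, $\grad f_2(p)$ share a sign, while away from $C$ the derivative along $V$, a positive combination of $Vf_1$ and $Vf_2$, is bounded below and swallows the perturbation. The paper installs the gradient conditions (2) differently: over a small cap $U_p$ around each $p\in C$ it replaces the convex combination by an interpolant $\ph_p$ that is strictly monotone along every radial segment $[x,2x]$ and has the prescribed gradients at $p$ and $2p$, glued in by a partition of unity on the sphere; monotonicity along the whole ray makes the radial sign condition automatic inside the caps, so no slope engineering is needed. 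Your additive collar corrections $\psi_i(r)h_i(x)$ are more explicit and quantitative, at the price of the delicate point you correctly isolate: $\psi_1'$ must equal $1$ at $r=1$ yet remain in a range where $(1-\psi_1')K+\psi_1'\,\partial_r f_1(p)$ keeps the sign of $K$, which your profile with $\min\psi_1'\ge-\eps$ achieves; note that you also need the upper bound $\psi_1'\le 1$ (a slope above $1$ can flip the sign when $|\partial_r f_1(p)|<|K|$), which your monotonically decreasing $\rho$ does provide, and the symmetric remark applies at $r=2$. One step worth stating explicitly: at collar points lying over $U\setminus C$ the radial derivative is not controlled, but there the $V$-derivative is still positive because $h_i$ is locally constant (the correction is tangentially flat) and $Vf_1,Vf_2>0$ off $C$, so no critical points arise there either.
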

\begin{proof}
For each $x \in S^{n - 1}$ denote by $n_x$ the unit outer normal to $S^{n - 1}$ at $x$. For $p \in C$ let $U_p$ be a small cap in $S^{n - 1}$ around $p$ such that 
$$
\sign(f_2(x) - f_1(x)) = \sign \left\langle n_x, \grad f_1(x)\right\rangle = \sign \left\langle n_x, \grad f_2(x)\right\rangle \text{for all } x \in U_p.
$$
Let $\ph_p \colon \{x \in A \colon x/|x| \in U_p\} \to \R$ be a smooth function such that for any $x \in S^{n - 1}$
\begin{enumerate}
\item $\ph_p(x) = f_1(x)$ and $\grad \ph_p(x) = \grad f_1(x)$;
\item $\ph_p(2x) = f_2(x)$ and $2\grad \ph_p(2x) = \grad f_2(x)$;
\item $\res{\ph_p}{[x, 2x]}$ is strictly monotone.
\end{enumerate} 
Let $F_{reg}\colon A \to \R$ be a convex combination of $f_1$ and $f_2$:
$$
F_{reg}(x) = (2 - |x|)f_1(x/|x|) + (|x| - 1) f_2(x/|x|).
$$
Take a partition of unity $\{h_p\}_{p \in C} \cup \{h\}$ subordinate to an open cover $\{U_p\}_{p \in C} \cup \{U\}$ where $U = S^{n - 1} \setminus C$ and define
$$
F = hF_{reg} + \sum_{p \in C} h_p\ph_p.
$$
We need to check that $F$ has no critical points. Indeed, $\langle \grad F(x), x/|x|\rangle = 0$ only if $f_1(x) = f_2(x)$ and that can happen only outside each of $U_p$. But then for $y \in S^{n - 1}$ we have
$$
F(|x| \cdot y)= (2 - |x|)f_1(y) + (|x| - 1)f_2(y),
$$ 
so 
$$
VF(|x| \cdot \underline{\phantom{y}}) (y) = ((2 - |x|)) Vf_1(y) + (|x| - 1)Vf_2(y) > 0
$$
thus $\grad F(x) \ne 0$.
\end{proof}

\section{Functions in a neighbourhood of $S^{n - 1}$ admitting different indices}
\label{Functions admitting different indices}
With all the tools developed in the previous section we are ready to construct a function that admits two different indices. We first illustrate the idea with the two-dimensional case.

\begin{figure}
\includegraphics[scale=0.4]{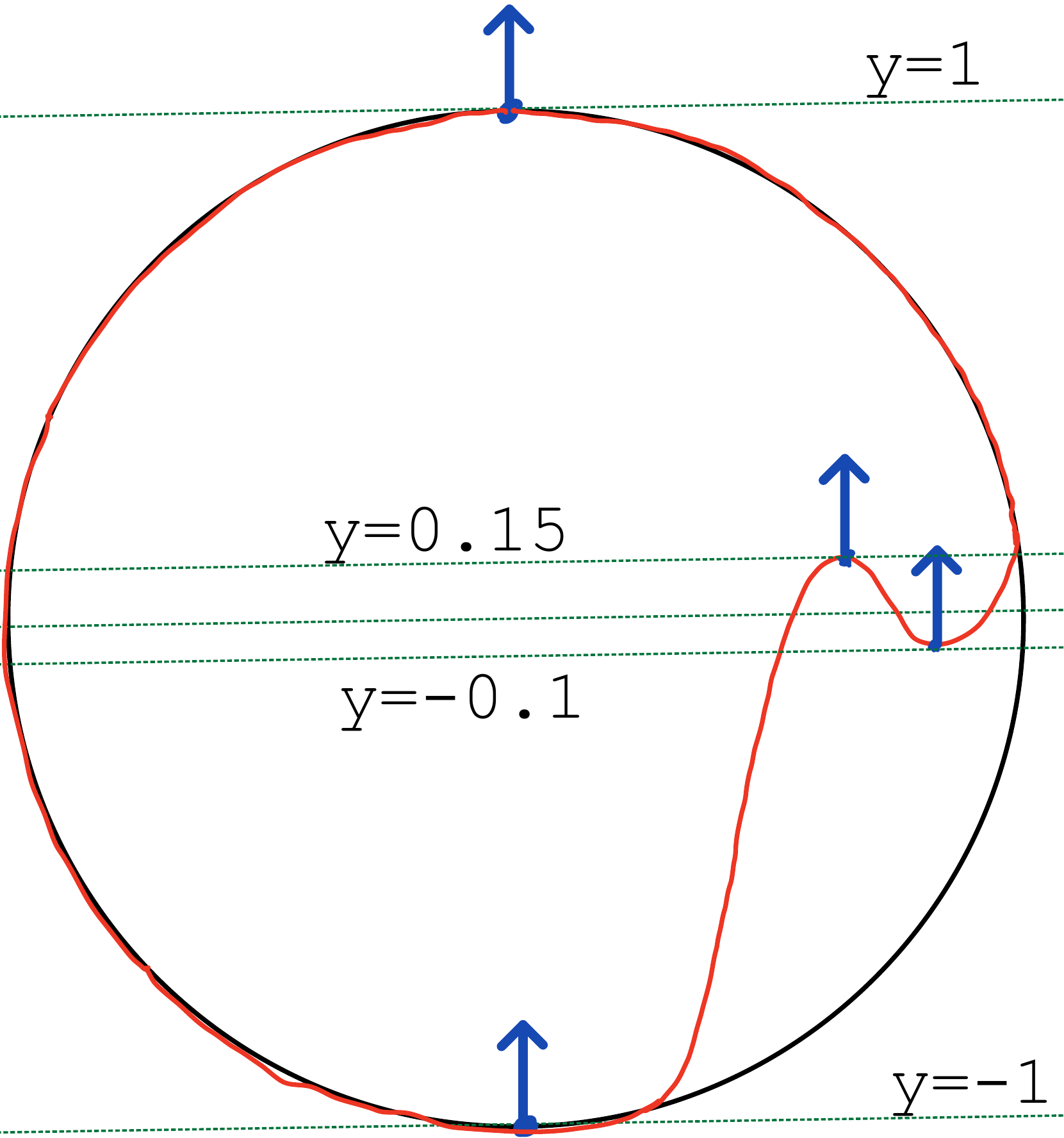}
\caption{Insertion of a pair of critical points with arrows down in dimension $2$}
\label{F^0_2}
\end{figure}

\begin{figure}
\includegraphics[scale=0.4]{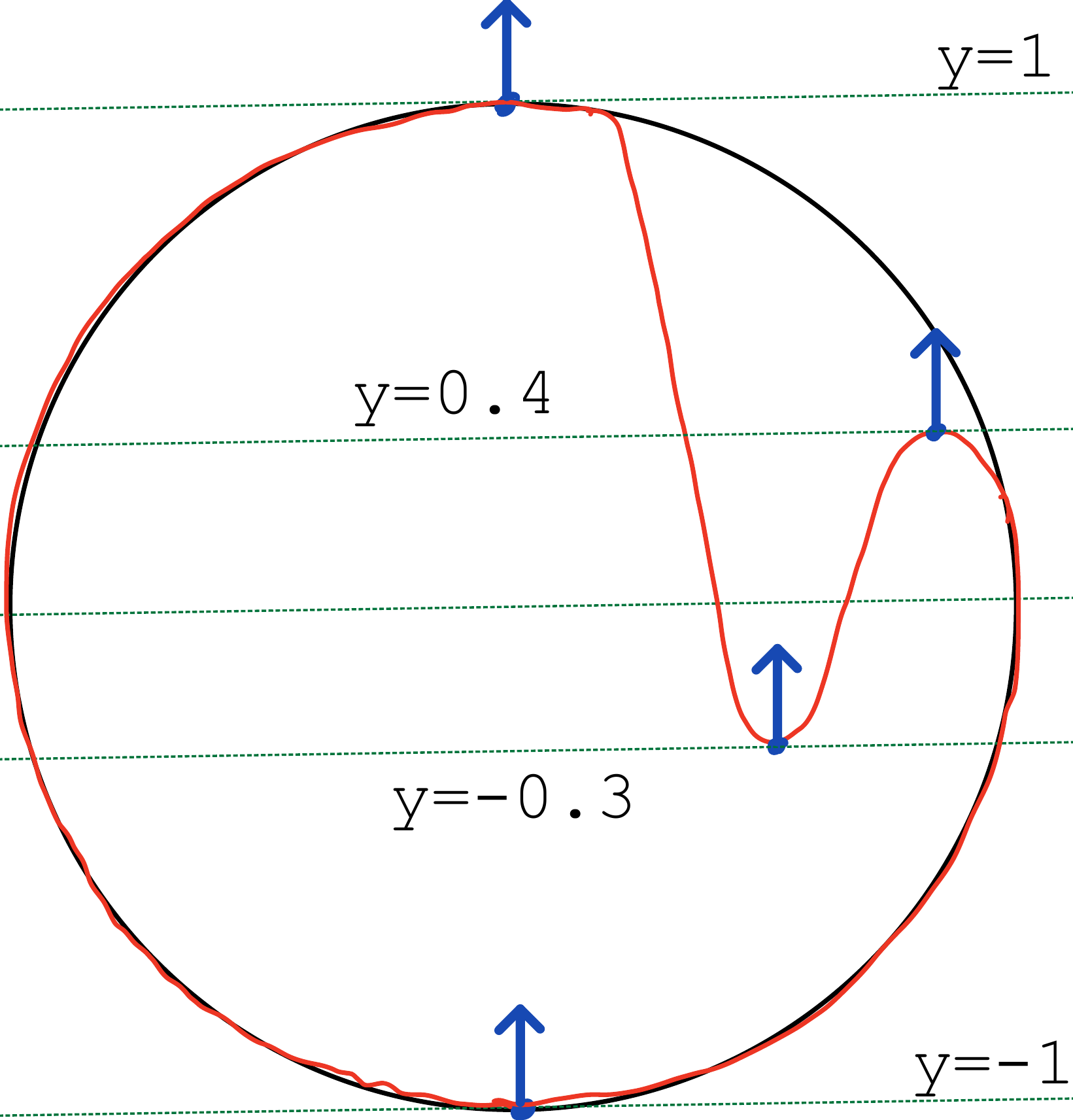}
\caption{Insertion of a pair of critical points with arrows down in dimension $2$}
\label{F^2_2}
\end{figure}

\begin{figure}
\includegraphics[scale=0.35]{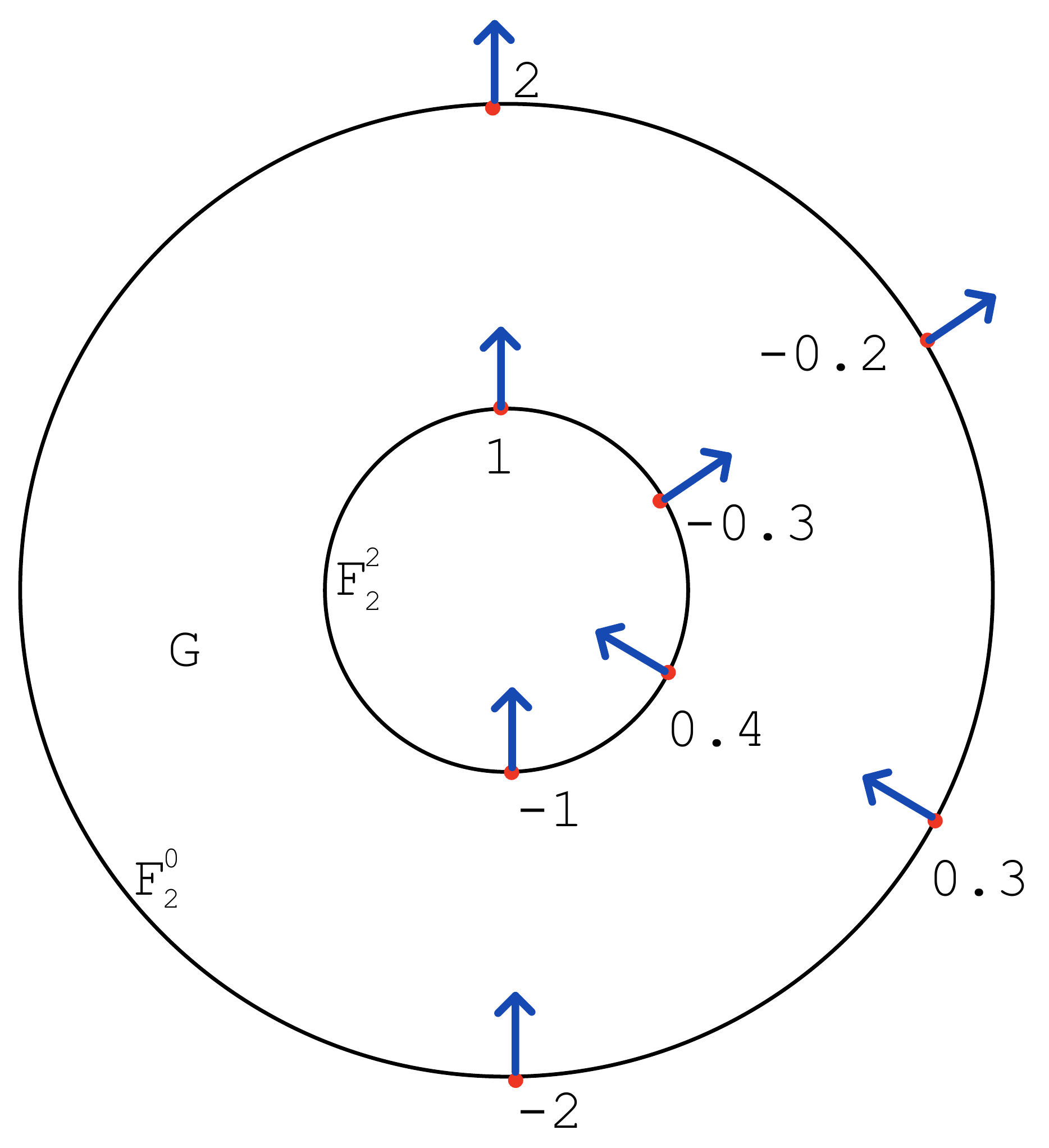}
\caption{Connecting two functions in dimension $2$}
\label{connection_2}
\end{figure}

Let $F_1^0 \in C^{\infty}(B_{\mathbf 0}^2(1))$ be a Morse function without critical points given by ${F_1^0(x, y) = 2y(\Phi^0(x, y))}$, where $\Phi^0$ is a diffeomorphism of the plane that maps $B_{\mathbf 0}^2$ diffeomorphically onto the figure bounded by the red curve on Figure~\ref{F^0_2}. Let $F_1^2 \in C^{\infty}(B_{\mathbf 0}^2(1))$ be a Morse function without critical points given by $F_1^0(x, y) = y(\Phi^2(x, y))$, where $\Phi^2$ is a diffeomorphism of the plane that maps $B_{\mathbf 0}^2$ diffeomorphically onto the figure bounded by the red curve on Figure~\ref{F^2_2}. Which choose the diffeomorphisms $\Phi^0$ and $\Phi^2$ in such a way that $\Crit \res{F^0_1}{S^1} = \Crit \res{F^2_1}{S^1}$.

$F_2^0$ is obtained from $F_1^0$ by a flip of index $0$ at the critical point $p^0$ with $F_1^0(p^0) = -0.2$. $F_2^2$ is obtained from $F_1^2$ by a flip of index $2$ at the critical point $p^2$ with $F_1^2(p^2) = 0.4$. Now, using Lemma~\ref{Connecting two Morse functions}, we connect $F_2^0$ and $F_2^2$, let $G$ be the resulting function in $\{p \in \R^2 \colon 1 \le |p| \le 2\}$ (see Figure~\ref{connection_2}). 

$G$ and $F_2^2$ satisfy the assumptions of Lemma~\ref{Smoothing in a neighbourhood of the sphere}, so there exists a Morse function function $F_3^2 \in C^{\infty}(B_{\mathbf 0}^2(2))$ such that $\Crit F_3^2 = \{\mathbf 0\}$, $F_3^2$ is equal to $F_2^2$ in a neighbourhood of the origin and is equal to $G$ in a neighbourhood of $2S^{1}$. 

Now extend $F_2^0$ from $B_{\mathbf 0}^2(2)$ to a function $F_3^0$ on $B_{\mathbf 0}^2(2 + \eps)$ in such a way that there exist a diffeomorphism $\Psi\colon B^2_{\mathbf 0}(1) \to B^2_{\mathbf 0}(1 + \eps)$ isotopic to the homothety and an orientation-preserving diffeomorphism $L\colon \R \to \R$ such that $F_3^0 \circ \Psi = L \circ F_2^0$ near $S^{1}$. $F_3^2$ and $F_3^0$ satisfy the assumptions of Lemma~\ref{Smoothing in a neighbourhood of the sphere} near $2S^1$, so there exists a Morse function $F$ on $B_{\mathbf 0}^2(2 + \eps)$ such that $\Crit F \{\mathbf 0\}$, $F$ is equal to $F_3^2$ in a neighbourhood of the origin and to $F_3^0$ in a neighbourhood $U$ of $(2 + \eps) S^1$. 

We obtained a function $\res{F}{U}$ that can be extended to $B_{\mathbf 0}^2(2 + \eps)$ in two different ways: by $F$ and by $F_3^0$. In the former case we have $\Crit F = \Morse F = \{\mathbf 0\}$ and $\mu_F(\mathbf 0) = 2$, and in the latter one we have $\Crit F_3^0 = \Morse F_3^0 = \{\mathbf 0\}$ and $\mu_F(\mathbf 0) = 0$. Now let us proceed with the general case.

\begin{thm} \label{two functions}
Let $n \ge 2$ and $0 \le \lambda \le n - 2$. Then there exists a Morse function $f\colon U \to \R$ in the neighbourhood $U$ of $S^{n - 1}$ inside $B_{\mathbf 0}^n(1)$ that admits both indices $\lambda$ and $\lambda + 2$.
\end{thm}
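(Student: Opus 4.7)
The plan is to imitate, in general dimension $n$ and for arbitrary $0 \le \lambda \le n-2$, the two-dimensional construction sketched immediately above. The first task is to manufacture two Morse functions $F_2^{\lambda}, F_2^{\lambda+2}\colon B^n_{\mathbf 0}(1) \to \R$ with $\Crit = \{\mathbf 0\}$ of indices $\lambda$ and $\lambda+2$ respectively, whose boundary restrictions fit the hypotheses of Lemma~\ref{Connecting two Morse functions}. First pick a Morse function $g\colon S^{n-1} \to \R$ whose critical set $C$ contains a point $p$ of index $\lambda$ and a point $q$ of index $\lambda+1$; this is possible for any $0 \le \lambda \le n-2$ since both indices lie in $\{0, \ldots, n-1\}$. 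Choose an orientation-preserving diffeomorphism $L\colon \R \to \R$ of the target with $L(g(p)) > g(p)$, $L(g(q)) < g(q)$, and $L(g(r)) \ne g(r)$ for every remaining $r \in C$. Using a partition of unity in a tubular neighbourhood of $S^{n-1}$, build smooth Morse extensions $F_1^{\lambda}, F_1^{\lambda+2}\colon B^n_{\mathbf 0}(1) \to \R$ without interior critical points, with $\res{F_1^{\lambda}}{S^{n-1}} = g$ and $\res{F_1^{\lambda+2}}{S^{n-1}} = L \circ g$, and with the radial direction of the gradients at every $r \in C$ chosen so that $\grad F_1^{\lambda}(p) \in T_p^{in}B^n_{\mathbf 0}(1)$, $\grad F_1^{\lambda+2}(q) \in T_q^{out}B^n_{\mathbf 0}(1)$, and at every other $r$ both gradients point ``in'' if $L(g(r)) < g(r)$ and ``out'' if $L(g(r)) > g(r)$. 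Lemma~\ref{Existence of a flip from down to up} at $p$ then turns $F_1^{\lambda}$ into $F_2^{\lambda}$ with unique critical point $\mathbf 0$ of index $\lambda$, and Corollary~\ref{Existence of a flip from up to down} at $q$ turns $F_1^{\lambda+2}$ into $F_2^{\lambda+2}$ with unique critical point $\mathbf 0$ of index $\lambda+2$; each flip only reverses the radial component at its designated point, preserving all other boundary data.

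Now glue. Lemma~\ref{Connecting two Morse functions} applied to $F_2^{\lambda+2}$ attached to the inner sphere of the annulus $\{1 \le |x| \le 2\}$ and to $F_2^{\lambda}$ attached to the outer sphere — with a common ascending vector field furnished by a Morse-Smale field for $g$ on $S^{n-1}$ (which remains Morse-Smale for $L \circ g$ as $L$ is monotone) — produces a critical-point-free Morse function $G$ on the annulus. Lemma~\ref{Smoothing in a neighbourhood of the sphere} at $S^{n-1}$ applied to $F_2^{\lambda+2}$ inside and $G$ outside yields $F_3^{\lambda+2}\colon B^n_{\mathbf 0}(2) \to \R$ with unique critical point $\mathbf 0$ of index $\lambda+2$, equal to $G$ in a collar of $2 S^{n-1}$. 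In parallel, extend $F_2^{\lambda}$ from $B^n_{\mathbf 0}(1)$ to a Morse function $F_3^{\lambda}\colon B^n_{\mathbf 0}(2+\eps) \to \R$ with unique critical point $\mathbf 0$ of index $\lambda$ by precomposition with a diffeomorphism isotopic to a homothety of factor $\approx 2$, arranged (using an orientation-preserving target reparametrisation) so that $F_3^{\lambda}$ agrees with $G$ in a collar of $2 S^{n-1}$; the identity $G(2x) = F_2^{\lambda}(x)$ coming from Lemma~\ref{Connecting two Morse functions} together with the scaling of radial gradients makes this match consistent. A second application of Lemma~\ref{Smoothing in a neighbourhood of the sphere} at $2 S^{n-1}$ to $F_3^{\lambda+2}$ inside and $F_3^{\lambda}$ outside produces a Morse function $F\colon B^n_{\mathbf 0}(2+\eps) \to \R$ with unique critical point $\mathbf 0$ of index $\lambda + 2$. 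Since this smoothing is supported near $2 S^{n-1}$, $F$ and $F_3^{\lambda}$ coincide on a neighbourhood $U$ of $(2+\eps) S^{n-1}$. Pulling everything back via the homothety $B^n_{\mathbf 0}(2+\eps) \to B^n_{\mathbf 0}(1)$ delivers the desired $f := \res{F}{U}$ which admits index $\lambda + 2$ (witnessed by $F$) and index $\lambda$ (witnessed by $F_3^{\lambda}$).

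The main technical obstacle is the bookkeeping in the first paragraph: the extensions $F_1^\lambda, F_1^{\lambda+2}$, the reparametrisation $L$, and the gradient directions at the boundary critical points must all be chosen simultaneously so that after the two flips \emph{all four} conditions of Lemma~\ref{Connecting two Morse functions} hold — coincident critical sets, pointwise distinct values, an in/out pattern controlled by the sign of $L \circ g - g$, and a common tangential ascending field on $S^{n-1}$. The parity hypothesis $\lambda \le n - 2$ enters precisely here, since it is what makes both a flip of index $\lambda$ from an ``in'' point of index $\lambda$ and a flip of index $\lambda + 2$ from an ``out'' point of index $\lambda + 1 \le n - 1$ available on the boundary $S^{n-1}$ of the same ball. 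Everything else — the two applications of the smoothing lemma and the diffeomorphism rescalings — is a routine generalisation of the two-dimensional blueprint.
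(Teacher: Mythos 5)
Your overall architecture is the same as the paper's (two fillings of the ball with a single interior critical point, of index $\lambda$ and $\lambda+2$, whose boundary data are joined by Lemma~\ref{Connecting two Morse functions} and merged by Lemma~\ref{Smoothing in a neighbourhood of the sphere}, then re-attached after a rescaling), but there is a genuine gap at your very first step. You assume that, for an arbitrary Morse function $g$ on $S^{n-1}$ and an arbitrary reparametrisation $L$, one can ``build smooth Morse extensions $F_1^{\lambda}, F_1^{\lambda+2}$ without interior critical points'' whose gradients at the boundary critical points realise an in/out pattern prescribed at will, and that a partition of unity does this. That is exactly the hard part of the whole problem, and it is false in general: a critical-point-free extension to $B^n_{\mathbf 0}(1)$ forces strong restrictions on the in/out pattern. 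Already at the level of Euler characteristics one needs $\sum_{\mu}(-1)^{\mu}\,\#\Crit^{in}_{\mu}=1$ for the boundary restriction (compare Proposition~\ref{parity of the index} and the remark after it); with your prescription this signed count equals $(-1)^{\lambda}+(-1)^{\lambda+1}+S=S$ for $F_1^{\lambda}$ and $S$ for $F_1^{\lambda+2}$, where $S=\sum (-1)^{\mu_g(r)}$ runs over the $r\ne p,q$ with $L(g(r))<g(r)$ --- a quantity you never control. Concretely, if $L(g(r))>g(r)$ for all $r\ne q$ (perfectly compatible with your constraints on $L$), then every boundary gradient of $F_1^{\lambda+2}$ points outward, so its global minimum is attained at an interior critical point and no such extension exists. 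Even when the numerical condition holds, realising a prescribed restriction together with a prescribed admissible in/out pattern by a critical-point-free extension is a Barannikov-type existence problem, not a partition-of-unity construction. The paper never has to face it: it starts from $x^n$, which is its own critical-point-free extension, and produces the two boundary functions only through moves whose effect on extensions is controlled --- standard births (Lemma~\ref{Existence of standard birth}) performed at a point of a common flow line after shifting by the gradient flow (which is also how the two restrictions come to share the critical set and a common ascending field), then flips (Lemma~\ref{Existence of a flip from down to up}, Corollary~\ref{Existence of a flip from up to down}) and a target reparametrisation. Your proposal replaces this by an unproved existence assertion, so the main difficulty is not addressed; it is not mere bookkeeping.

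A smaller, fixable slip in the bookkeeping you do give: with your gluing order ($F_2^{\lambda+2}$ at the inner sphere, $F_2^{\lambda}$ at the outer one, i.e.\ $f_1=L\circ g$, $f_2=g$ in Lemma~\ref{Connecting two Morse functions}), the inequalities must go the other way. After the flips, $\grad F_2^{\lambda}(p)$ points outward, so the lemma forces $f_2(p)>f_1(p)$, i.e.\ $g(p)>L(g(p))$, and similarly $\grad F_2^{\lambda+2}(q)$ points inward, forcing $g(q)<L(g(q))$ --- the opposite of your choices $L(g(p))>g(p)$, $L(g(q))<g(q)$. (You also leave the radial directions of $\grad F_1^{\lambda+2}$ at $p$ and of $\grad F_1^{\lambda}$ at $q$ unspecified, although Lemma~\ref{Connecting two Morse functions} needs them.) These sign conditions are precisely what step (6) of the paper's proof of Theorem~\ref{two functions} arranges with its chain of inequalities.
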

\begin{proof}
The proof essentially follows the strategy explained above for the two-dimensional case.
\begin{enumerate}
\item $F^\lambda_0 = F^{\lambda + 2}_0 = x^n$. The gradient-like vector fields $V = \grad \res{x^n}{S^{n - 1}}$ for the standard round metric on the sphere, $\Phi_t$ is the flow on $S^{n - 1}$ generated by $-V$. The critical points of $f^{\lambda}_0$ and $f^{\lambda + 2}_0$ are $n = (0, \ldots, 0, 1)$ and $s = (0, \ldots, 0, -1)$.
\item Let $\gamma$ be the flow line on $S^{n - 1}$ through the point $p = (1, 0, \ldots, 0)$. Denote $p_+ = \gamma(\eps)$, $p_- = \gamma(-\eps)$.
\item Set $F_1^{\lambda} = F_0^{\lambda} \circ \Psi^\lambda$ and $F_1^{\lambda + 2} = F_0^{\lambda + 2} \circ \Psi^{\lambda + 2}$, where $\Psi^\lambda$ and $\Psi^{\lambda + 2}$ are diffeomorphisms of $B_{\mathbf 0}^n(1)$ such that $\res{\Psi^\lambda}{S^{n - 1}} = \Phi_{2\eps}$ and $\res{\Psi^{\lambda + 1}}{S^{n - 1}} = \Phi_{-2\eps}$.
\item $F_2^{\lambda}$ and $F_2^{\lambda + 2}$ are obtained by a standard birth at point $p$ from $F_1^{\lambda}$ and $F_1^{\lambda + 2}$ respectively. 
\item $F_3^\lambda$ is a flip of $F_2^\lambda$ at $p_-$ of index $\lambda$, $F_3^{\lambda}$ is a flip of $F_3^{\lambda + 2}$ at $p_+$ of index $\lambda + 2$. 
\item Let $L\colon \R \to \R$ be an orientation preserving deffeomorphism such that 
$$
L(f_3^\lambda(s)) < f_3^{\lambda + 2}(s) < f_3^{\lambda + 2}(p_-) < L(f_3^\lambda(p_-)) < L(f_3^\lambda(p_+)) < f_3^{\lambda + 2}(p_+) < f_3^{\lambda + 2}(n) < L(f_3^\lambda(n)).
$$
Set $F_4^\lambda = L \circ F_3^\lambda$ and $F_4^{\lambda + 2} = F_3^{\lambda + 2}$.
\item Use Lemma~\ref{Connecting two Morse functions} to connect $\res{F_4^\lambda}{S_\eps^-}$ and $\res{F_4^{\lambda + 2}}{S_\eps^-}$ by a function $G\colon \{1 \le |x| \le 2\} \to \R$.
\item Extend $F_4^\lambda$ to $F_5^\lambda$ on $B^n_{\mathbf 0}(1 + \eps)$ in such a way that there exist a diffeomorphism $\Psi\colon B^n_{\mathbf 0}(1) \to B^n_{\mathbf 0}(1 + \eps)$ isotopic to the homothety and an orientation-preserving diffeomorphism $L\colon \R \to \R$ such that $F_5^\lambda \circ \Psi = L \circ F_4^\lambda$ near $S^{n - 1}$.
\item The functions $\res{F_4^{\lambda + 2}}{S_\eps^-}$ and $\res{G}{S_\eps^+}$ satisfy the assumptions of Lemma~\ref{Smoothing in a neighbourhood of the sphere}, let $F_5^{\lambda+2}$ be the resulting Morse function in $B^n_{\mathbf 0}(2)$.
\item The function $\res{F_5^{\lambda + 2}}{2S_\eps^-}$ and $\res{F_5^{\lambda}}{2S_\eps^+}$ satisfy the assumptions of Lemma~\ref{Smoothing in a neighbourhood of the sphere}, let $F_6^{\lambda + 2}$ be the resulting Morse function in $B^n_{\mathbf 0}(2 + 2\eps)$.
\end{enumerate}
The functions $F_5^{\lambda}$ and $F_6^{\lambda + 2}$ are equal in a neighbourhood of $\partial B^n_{\mathbf 0}(2 + 2\eps)$, ${\Crit F_5^{\lambda} = \Crit F_6^{\lambda + 2} = \{\mathbf 0\}}$, $\mu_{F_5^{\lambda}}(\mathbf 0) = \lambda$, and $\mu_{F_6^{\lambda + 2}}(\mathbf 0) = \lambda + 2$. Thus we constructed the desired functions.
\end{proof}

\bibliographystyle{acm}
\bibliography{Morse_biblio}

\end{document}